\theoremstyle{plain}
\newtheorem{thm}{Theorem}[section]
\newtheorem{prop}[thm]{Proposition}
\newtheorem{lem}[thm]{Lemma}
\newtheorem{cor}[thm]{Corollary}
\theoremstyle{definition}
\newtheorem{defn}[thm]{Definition}
\theoremstyle{remark}
  \def\C{{\mathbb{C}}}           \def\N{{\mathbb{N}}}    \def\R{{\mathbb{R}}}  \def\T{{\mathbb{T}}}      \def\Z{{\mathbb{Z}}}
\newcommand\Hom{\operatorname{Hom}}
\newcommand\id{\operatorname{id}}
\newcommand\Map{{\operatorname{Map}}}
\newcommand\Prob{\operatorname{Prob}}
\newcommand\topo{\operatorname{top}}
\def\cc{{\curvearrowright}}
\newcommand{\actson}{\curvearrowright}
\newcommand{\acston}{\actson}
\newcommand{\ip}[1]{\langle #1 \rangle}
\begin{document}

\title[Local and Doubly Empirical Convergence and the Entropy of Algebraic Actions]{Local and Doubly Empirical Convergence and the Entropy of Algebraic Actions of Sofic Groups}      
\author{Ben Hayes}
\address{Stevenson Center\\
         Nashville, TN 37240}
\email{benjamin.r.hayes@vanderbilt.edu}
\date{\today}
\maketitle
\begin{abstract} Let $G$ be a sofic group and $X$ a compact group with $G\actson X$ by automorphisms. Using (and reformulating) the notion of local and doubly empirical convergence developed by Austin, we show that in many cases the topological and the measure-theoretic entropy with respect to the Haar measure of $G\actson X$ agree. Our method of proof recovers all known examples. Moreover, the proofs are direct and do not go through explicitly computing the measure-theoretic or topological entropy.

\end{abstract}
\tableofcontents

\section{Introduction}
This paper is concerned with studying entropy for actions of sofic groups on compact groups by automorphisms. Entropy for probability measure-preserving actions of sofic groups was initiated in ground-breaking work of Bowen in \cite{Bow} under the assumption of a finite generating partition. Work of Kerr-Li in \cite{KLi} removed this assumption, defined topological entropy for actions of sofic groups, and proved a variational principle for entropy. The work of Bowen, Kerr-Li thus extended the classical notion of entropy for actions of amenable groups to the vastly larger class of sofic groups.

One class of actions whose entropy theory has received much interest is the case of automorphisms on compact groups. Let $G$ be a countable, discrete, sofic group, let $X$ be a compact group and suppose that $G\actson X$ by automorphisms. One can think of this as a \emph{topological} dynamical system and also a probability measure-preserving system, giving $X$ the Haar measure (which we denote by $m_{X}$). When $X$ is abelian, such an action is called an \emph{algebraic action}. When $G$ is amenable, Deninger proved (see \cite{Den} Theorem 3) that the topological and measure-theoretic entropy coincide for an action of $G$ on a compact group by automorphisms.

While such a general equality of entropies is impossible in the nonamenable case (e.g. if $X$ is finite and $G$ is the free group on two generators there always sofic approximations for which the entropy of $G\actson (X,m_{X})$ is $-\infty$), certain cases of such an equality have been proved for general sofic groups. Let $f\in M_{m,n}(\Z(G))$ and define  $r(f)\colon \Z(G)^{\oplus m}\to \Z(G)^{\oplus n}$ by
\[(rf)\xi)(j)=\sum_{l=1}^{m} \xi(l)f_{lj},\mbox{ for $1\leq j\leq n$}.\]
Let $X_{f}$ be the Pontryagin dual of $\Z(G)^{\oplus n}/(r(f)(\Z(G)^{\oplus m}))$ (i.e. the space of continuous homomorphisms $\Z(G)^{\oplus n}/(r(f)(\Z(G)^{\oplus m}))$ into $\T=\R/\Z$), and let $G\actson X_{f}$ by
\begin{equation}\label{E:algactionintro}
(g\chi)(a)=\chi(g^{-1}a),\mbox{ $a\in \Z(G)^{\oplus n}/(r(f)(\Z(G)^{\oplus m}))$}.
\end{equation}
When $m=n=1$ this is called a \emph{principal algebraic action}. Work of Bowen-Li \cite{BowenLi}, Kerr-Li, and Bowen (\cite{KLi},\cite{BowenEntropy}) proved the equality of topological and measure-theoretic entropy for  principal algebraic actions, when $G$ is residually finite and under some restrictions $f.$ Specifically \cite{KLi},\cite{BowenEntropy} require that $f$ has a convolution inverse in $\ell^{1}$ (though the computation of topological entropy in \cite{KLi} only requires that $f$ is invertible in the full $C^{*}$-algebra of $G$). The work \cite{BowenLi} require that $f$ is (up to multiplicative constant) a Laplace operator (which, if $G$ is nonamenable, implies that $f$ is invertible in the group von Neumann algebra of $G$). We mention our previous work in \cite{Me5}, for which we need more notation. For a countable, discrete group $G$ and $f\in \C(G),$ define $\lambda(f)\colon \ell^{2}(G)\to \ell^{2}(G)$ by
\[(\lambda(f)\xi)(g)=\sum_{h\in G}f(h)\xi(h^{-1}g),\mbox{ for all $g\in G$.}\]
For $A\in M_{m,n}(\C(G)),$ we define $\lambda(A)\colon \ell^{2}(G)^{\oplus n}\to \ell^{2}(G)^{\oplus m}$ by
\[(\lambda(A)\xi)(l)=\sum_{j=1}^{n} \lambda(A_{lj})\xi(j),\mbox{ for $1\leq l\leq m$}.\]
Our previous work in \cite{Me5} proved equality of measure-theoretic and topological entropy for  $G\actson X_{f}$ under the assumption that $f\in M_{m,n}(\Z(G)),$ and that $\lambda(f)$ has dense image, for an arbitrary sofic group $G$ (additionally it is not required that $m=n=1,$ though it is implied that $m\leq n$). Each of \cite{KLi},\cite{BowenLi},\cite{Me5} proved this result indirectly, by explicitly computing both the topological and measure-theoretic entropy to be equal to the Fuglede-Kadison determinant. In \cite{KLi}, Problem 7.7 Kerr-Li asked if there is a direct way to see this equality without having to compute both entropies. Progress on this problem was made by Gaboriau-Seward in \cite{GabSew}, who gave the first general results on equality of topological and measure-theoretic entropy without directly computing either entropy. Specifically, they showed equality of the entropies when $G\actson X$ by automorphisms, where $X$ is any profinite group and when the homoclinic group of $G\actson X$ is dense.

In this paper, we present a proof of this equality under fairly general hypotheses. Our work uses the notion of local and
doubly empirical convergence defined  in \cite{AustinAdd}. In \cite{AustinAdd}, this  is called doubly quenched convergence. Private communication with Austin (as well as other experts in the field) has led us to the opinion that locally and doubly empirical convergence (a term suggested by Austin) is better used to describe this property. The reader is invited to see the introduction and Section 2 of \cite{AustinEP} for reasons behind the name change. In this paper, we shall mostly use the term ``local and doubly empirical convergence'' (resp. `local and empirical convergence'') instead of ``doubly quenched'' (resp. ``quenched''). The main exceptions to this rule are when we will directly reference definitions, theorems, etc. in \cite{AustinAdd}, and in this case we will use the same terminology as in that paper.

We briefly describe local and doubly empirical convergence. By definition, soficity of $G$ allows one to find a sequence of integers $d_{i},$ and maps $\sigma_{i}\colon G\to S_{d_{i}}$ (here $S_{n}$ is the group  of permutations of $\{1,\dots,n\}$) which give  good finitary models of the action of $G$ on itself. These maps are not assumed to be homomorphisms, but this defect of not being a homomorphisms only occurs on a small portion of $\{1,\dots,d_{i}\}.$  Moreover, if one pretends that these maps give actions of $G,$ then for any $g\in G$ the set of points in $\{1,\dots,d_{i}\}$ which are fixed by $\sigma_{i}(g)$  again only occupies a small portion of $\{1,\dots,d_{i}\}$. Briefly, we can then think of these maps as giving ``almost free almost actions." 

	Now fix a compact space $X$ and let $G\actson X$ by homeomorphisms fixing a probability measure $\mu.$ The sofic entropy of $G\actson (X,\mu)$ counts the exponential growth rate of ``how many" maps $\phi\colon \{1,\dots,d_{i}\}\to X$ there are which are almost equivariant for the almost action of $G$ on $\{1,\dots,d_{i}\}$ and approximately pushforward the uniform measure to $\mu$. We will call these maps ``microstates" (this is a heuristic term that will not be defined precisely).
	Typically, these microstates for $G\actson (X,\mu)$ are produced by a probabilistic argument. In many cases this probabilistic argument is no accident: if $G$ is residually finite, if $G\actson (X,\mu)$ is ergodic and has a finite generating partition,  Bowen in \cite{BowenEntropy} shows how one can compute sofic entropy in terms of certain probability measures on $X^{d_{i}}$ which in some sense model  $\mu$. Moreover, the proof in \cite{BowenEntropy} easily extends to the case that $G$ is merely sofic instead of residually finite.  Austin's work in \cite{AustinAdd} modifies the methods in \cite{BowenEntropy}, considering a slightly different way to say that a sequence of measures $\mu_{i}\in \Prob(X^{d_{i}})$ models $\mu.$ 
	
		Roughly, the main difference is that Bowen requires that 
\[\lim_{i\to\infty} \frac{1}{d_{i}}\sum_{j=1}^{d_{i}}\int_{X^{d_{i}}}f(x(j))\,d\mu_{i}(x)=\int f\,d\mu\]
for all $f\in C(X),$  and that Austin requires
\[\int_{X^{d_{i}}}f(x(j))\,d\mu_{i}(x)\approx \int f\,d\mu\mbox{ for ``most'' $j\in\{1,\dots,d_{i}\}$},\]
and  every $f\in C(X).$ Thus Bowen's approach is to require convergence of $\int_{X^{d_{i}}}f(x(j))\,d\mu_{i}(x)$ to $\int f\,d\mu$ on average and Austin's requirement is that $\int_{X^{d_{i}}}f(x(j))\,d\mu_{i}(x)$ is close to $\int f\,d\mu$  with high probability in $j$. This is what Austin in \cite{AustinAdd} calls the \emph{local weak$^{*}$ convergence} of $\mu_{i}$ to $\mu,$ which is denoted $\mu_{i}\to^{lw^{*}}\mu.$ Strictly speaking, this is not the definition Austin gives in \cite{AustinAdd}. Austin works with invariant measures on the left shift space $X^{G}$ for some compact, metrizable space $X$. He also defines a notion of ``local and empirical convergence'' (called ``quenched convergence'' in \cite{AustinAdd}) convergence, which demands local weak$^{*}$ convergence of $\mu_{i}$  to $\mu$ and that $\mu_{i}$ be mostly concentrated on the space of microstates in Bowen's sense (see Section \ref{S:reform} for the precise definition). We shall spend much of Section \ref{S:reform}  translating between the version of local and empirical convergence describe above and the version given in \cite{AustinAdd}. Also considered in \cite{AustinAdd} is the notion of local and doubly empirical convergence (there called ``doubly quenched convergence''), which requires that $\mu_{i}\otimes \mu_{i}$ locally and  empirically converges to $\mu\otimes \mu.$
The work in \cite{AustinAdd} connects local and doubly empirical convergence  in an important way to various additivity properties of entropy and also gives a new approach to sofic entropy, based directly on the consideration of measures $\mu_{i}\in\Prob(X^{d_{i}})$ which locally and doubly empirically converge to $\mu.$  Local and doubly empirical convergence of measures ends up being the right tool to prove the equality of measure-theoretic and topological entropy.

	To state our results, we need the notion of entropy in the presence. Suppose that $G\actson (X,\mu)$ is a probability measure-preserving action and that $G\actson (Y,\nu)$ is a factor of $G\actson (X,\mu)$ with factor map $\pi.$ Entropy in the presence, denoted $h_{(\sigma_{i})_{i},\mu}(Y:X,G),$  measures ``how many" microstates for $G\actson (Y,\nu)$ there are which lift to microstates for $G\actson (X,\mu).$ Entropy in the presence stands in contrast to the usual entropy which just measures ``how many'' microstates for $G\actson (Y,\nu)$ there are (with no regard as to whether or not the come from microstates for $G\actson (X,\mu)$ or not). Note that $h_{(\sigma_{i})_{i},\mu}(X:X,G)$ is just the entropy of $G\actson (X,\mu),$ which we denote by $h_{(\sigma_{i})_{i},\mu}(X,G).$  Entropy in the presence was implicitly defined by Kerr in \cite{KerrPartition} and our previous work in \cite{Me9} showed how one can compute entropy in the presence in terms of a given topological model. 
	
	Suppose that we have a compact space $X,$ an action $G\actson X$ by homeomorphisms,  and  a topological factor $G\actson Y.$ We can also consider the topological entropy in the presence, which we denote by $h_{(\sigma_{i})_{i},\topo}(Y:X,G).$ Topological entropy in the presence is defined in a very analogous way to measure-theoretic entropy in the presence. Again in the case that $Y=X$ this is just the topological entropy, which we denote by $h_{(\sigma_{i})_{i},\topo}(X,G).$ Topological entropy in the presence was defined in  Definition 9.3 of \cite{LiLiang2}. and the formulation of topological entropy in the presence is close to what we have developed in \cite{Me9} (see also Definitions \ref{D:topentrpresence} and \ref{D:measentpresence} in this paper). In \cite{LiLiang2}, Li-Liang use the term ``entropy relative to an extension," but we will use the term ``entropy in the presence" to be consistent with our earlier terminology. Additionally, in later work we will develop relative entropy for actions of sofic groups and we do not want to confuse ``entropy in the presence" with relative entropy. As we showed in \cite{Me9}, if $G$ is amenable, then $h_{(\sigma_{i})_{i},\mu}(Y:X,G)=h_{\mu}(Y,G)$ where $h_{\mu}(Y,G)$ is the classical entropy defined for amenable groups. We now state the main theorem of the paper.

\begin{thm}\label{T:mainintro} Let $G$ be a countable, discrete, sofic group with sofic approximation $\sigma_{i}\colon G\to S_{d_{i}}.$ Let $X$ be a compact, metrizable group with $G\actson X$ by automorphisms. Suppose that there exists $\mu_{i}\in \Prob(X^{d_{i}})$ with $\mu_{i}\to^{lde}m_{X}.$ Then for any closed, normal, $G$-invariant subgroup $Y\subseteq X$ we have
\[h_{(\sigma_{i}),\topo}(X/Y:X,G)=h_{(\sigma_{i})_{i},m_{X}}(X/Y:X,G).\]
In particular, (taking $Y=\{1\}$) the topological entropy (with respect to $(\sigma_{i})_{i}$) of $G\cc X$ and the measure-theoretic entropy (with respect to $(\sigma_{i})_{i}$) of  $G\cc (X,m_{X})$ agree.
\end{thm}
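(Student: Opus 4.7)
The inequality $h_{(\sigma_i)_i,m_X}(X/Y:X,G)\leq h_{(\sigma_i),\topo}(X/Y:X,G)$ is a form of the sofic variational principle for entropy in the presence (Li--Liang), so the substantive work is the reverse inequality. The plan is to use $(\mu_i)_i$ to convert a maximal $\epsilon$-separated family of topological microstates for $G\cc X/Y$ (each lifting to a topological microstate for $G\cc X$) into a comparably large family of measure-theoretic microstates, via coordinate-wise multiplication in $X^{d_i}$ by a single well-chosen $y^{\ast}\in X^{d_i}$.

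The central ingredient is a translation lemma: for any sequence $(\tilde\phi_i)_i$ of topological microstates for $G\cc X$ with tolerances $\to 0$, the translates $(T_{\tilde\phi_i})_{\ast}\mu_i$, where $T_{\tilde\phi_i}(y)(j):=\tilde\phi_i(j)\cdot y(j)$, still satisfy $(T_{\tilde\phi_i})_{\ast}\mu_i\to^{lde} m_X$; in particular, for $\mu_i$-typical $y$, $\tilde\phi_i\cdot y$ is a measure-theoretic microstate for $(X,m_X)$. Local weak-$\ast$ convergence is preserved because $m_X$ is translation-invariant. For the Bowen-microstate concentration, the pointwise near-equivariance $\tilde\phi_i(\sigma_i(g)^{-1}j)\approx g\cdot\tilde\phi_i(j)$ reduces the relevant test (for $\psi\in C(X^F)$, $F\subset\subset G$) to
\[
\frac{1}{d_i}\sum_{j=1}^{d_i}\Psi_{\tilde\phi_i(j)}\bigl((y(\sigma_i(g)^{-1}j))_{g\in F}\bigr),\qquad \Psi_a(b):=\psi\bigl((g\cdot a\cdot b_g)_{g\in F}\bigr).
\]
Partitioning $X$ into small Borel pieces $\{B_k\}$ and approximating $\Psi_{\tilde\phi_i(j)}\approx\Psi_{a_k}$ on the level sets $S_k:=\{j:\tilde\phi_i(j)\in B_k\}$, Haar-invariance identifies the desired limit, and the local and doubly empirical property of $\mu_i$ supplies (through its second-moment content from $\mu_i\otimes\mu_i\to^{le}m_X\otimes m_X$) the variance estimate making the sub-family average over each $S_k$ concentrate near its expected value.

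Granted the translation lemma, the count is short. Fix neighborhoods $U\ni m_X$, $V\ni m_{X/Y}$, and $\epsilon>0$; choose an $\epsilon$-separated set $E_i\subset(X/Y)^{d_i}$ of topological microstates for $G\cc X/Y$, each admitting a topological-microstate lift $\tilde\phi\in X^{d_i}$, with $\tfrac{1}{d_i}\log|E_i|$ realizing $h_{(\sigma_i),\topo}(X/Y:X,G)$ to within arbitrary tolerance. Apply the translation lemma to each chosen lift and use Fubini to select a single $y^{\ast}\in X^{d_i}$ such that $\tilde\phi\cdot y^{\ast}$ is a $U$-microstate for $(X,m_X)$ for all but a vanishing fraction of those lifts. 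Right multiplication by $q^{d_i}(y^{\ast})$ (where $q\colon X\to X/Y$ is the quotient, applied coordinate-wise) is a sup-metric isometry of $(X/Y)^{d_i}$, so the images $\{q^{d_i}(\tilde\phi\cdot y^{\ast})\}$ form an $\epsilon$-separated family of $V$-microstates for $(X/Y,m_{X/Y})$ of size $(1-o(1))|E_i|$, each lifting to a $U$-microstate for $(X,m_X)$. Taking $\limsup_i\tfrac{1}{d_i}\log$ yields the reverse inequality.

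The main obstacle is the translation lemma: bridging the pointwise near-equivariance of $\tilde\phi_i$ (a topological condition) with the integrated condition required of a measure microstate (after multiplication by a random $y\sim\mu_i$) demands concentration of $(y(\sigma_i(g)^{-1}j))_{g\in F}$ not only globally in $j$ but also on each sub-family $S_k$. Such sub-family concentration is precisely what the local and doubly empirical hypothesis provides, via the approximate decorrelation of coordinates expressed by $\mu_i\otimes\mu_i\to^{le}m_X\otimes m_X$; it is at this step that doubly empirical (rather than mere empirical) convergence enters essentially.
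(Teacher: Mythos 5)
Your overall architecture coincides with the paper's: the easy inequality follows from the containment $\Map_{m_X}(\rho,F,L,\delta,\sigma_{i})\subseteq \Map(\rho,F,\delta,\sigma_{i})$; for the converse one multiplies a maximal separated family of topological microstates by a single $\mu_{i}$-typical element selected by Fubini, and bi-invariance of the metric on $X/Y$ preserves separation. Your ``translation lemma'' is exactly the paper's Lemma \ref{L:dqconv2}, and the counting argument you build on it is essentially the paper's proof of Theorem \ref{T:maximize}. (A minor point: the easy inequality is not an instance of the variational principle but the trivial containment of microstate spaces just noted.)

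The gap is in your proposed proof of the translation lemma. After partitioning $X$ into pieces $B_{k}$ with level sets $S_{k}=\{j:\tilde\phi_{i}(j)\in B_{k}\}$, you need the sub-family average $\frac{1}{|S_{k}|}\sum_{j\in S_{k}}f(a_{k}y(j))$ to concentrate under $\mu_{i}$, and you assert that the required variance bound is supplied by $\mu_{i}\otimes\mu_{i}\to^{le}m_{X}\otimes m_{X}$. But the variance of that sub-family average is $\frac{1}{|S_{k}|^{2}}\sum_{j,j'\in S_{k}}\operatorname{Cov}_{\mu_{i}}\bigl(f(a_{k}y(j)),f(a_{k}y(j'))\bigr)$, i.e.\ it is governed by correlations between two \emph{distinct coordinates of a single} $\mu_{i}$-sample, whereas local and doubly empirical convergence directly controls the statistics of a \emph{single coordinate of two independent} samples, namely $\frac{1}{d_{i}}\sum_{j}F(y(j),y'(j))$ for $(y,y')\sim\mu_{i}\otimes\mu_{i}$. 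These are different second-order objects, and passing from the latter to the former is precisely the nontrivial content of Austin's product theorem (Theorem A of \cite{AustinAdd}); it is not a routine Chebyshev computation. The paper sidesteps this entirely: Lemma \ref{L:dqconv2} is proved by contradiction, extracting a weak$^{*}$ limit $\nu$ of $(\psi_{i_{n}})_{*}(u_{d_{i_{n}}})$ from a putative sequence of bad topological microstates, invoking Corollary \ref{C:dqfacts} (\ref{I:productdq}) --- a repackaging of Austin's Theorem A --- to conclude that $\psi_{i_{n}}\otimes\phi$ is a microstate for $\nu\otimes m_{X}$ for $\mu_{i_{n}}$-most $\phi$, and then pushing forward under the multiplication map $p(x,y)=xy$, using $p_{*}(\nu\otimes m_{X})=\nu*m_{X}=m_{X}$. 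You should either cite Austin's Theorem A at this point, as the paper does, or supply a genuine proof of the sub-family concentration; as written that step does not follow from the hypotheses you name.
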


It is shown in  Corollary 5.18 of \cite{AustinAdd} that for a probability measure-preserving action $G\actson (X,\mu)$ the existence of a sequence of measures which locally and doubly empirically convergence converge to $\mu$ is preserved under measure-theoretic factors. It follows that in Theorem \ref{T:mainintro} we in fact have
\[h_{(\sigma_{i})_{i},\topo}(X/Y,G)=h_{(\sigma_{i})_{i},m_{X}}(X/Y,G)\]
for every closed, normal, $G$-invariant subgroup $Y\subseteq X.$ The ability to take factors by normal subgroups in Theorem \ref{T:mainintro} will be important in our follow-up paper, where we will use Theorem \ref{T:mainintro} and our previous results in \cite{Me7} to give new examples of algebraic actions of nonamenable groups which have completely positive entropy in the presence. This will require an investigation of the structural properties of the Outer Pinsker factor for sofic groups (whose definition is folklore, but was first written down in \cite{Me9}).

We remark that  the existence of a sequence $\mu_{i}\in \Prob(X^{d_{i}})$ which locally and doubly empirically converge to $\mu$ is automatic in the amenable case. Thus our results recover the result of Deninger in \cite{Den} on equality of topological and measure-theoretic entropy for actions on compact groups. Moreover, the existence of a sequence $\mu_{i}\in\Prob(X^{d_{i}})$ which locally and doubly empirically converge to $\mu$ is implicitly shown in every case where entropy has been computed. Specifically, we have the following proposition.

 \begin{prop}\label{P:mainexamples} Let $G$ be a countable, discrete, sofic group with sofic approximation $\sigma_{i}\colon G\to S_{d_{i}}.$ For each of the following probability measure-preserving actions $G\actson (X,\mu),$ there exists a sequence $\mu_{i}\in \Prob(X^{d_{i}})$ with $\mu_{i}\to^{lde}\mu$:
 \begin{enumerate}[(i)]
 \item $G\actson (X,\mu)$ any Bernoulli action (implicitly shown in \cite{Bow} and explicitly shown in \cite{AustinAdd}),
 \item $X=X_{f},\mu=m_{X_{f}}$ for some $f\in M_{m,n}(\Z(G)),$ with action given by (\ref{E:algactionintro}) and with $\lambda(f)$ having dense image (implicit in \cite{Me5}),\label{I:finpresintro}
 \item $X=X_{f},\mu=m_{X_{f}}$ for some $f\in M_{n}(\Z(G)),$ with action given by (\ref{E:algactionintro}) and with $\lambda(f)$ injective (implicit in \cite{Me5}),
 \item $X$ is a profinite group, $\mu=m_{X},$ where $G\actson X$ by automorphisms, and $G\actson X$ has a dense homoclinic group (implicit in \cite{GabSew}).\label{I:GabSewintro}
 \end{enumerate}
 Thus in cases $(\ref{I:finpresintro})-(\ref{I:GabSewintro}),$ we have that for every closed, normal, $G$-invariant subgroup $Y\subseteq X,$
 \[h_{(\sigma_{i})_{i},m_{X}}(X/Y:X,G)=h_{(\sigma_{i})_{i},\topo}(X/Y:X,G).\]
 \end{prop}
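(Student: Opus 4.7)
The plan is to treat the four cases in turn, in each case exhibiting an explicit sequence $\mu_i \in \Prob(X^{d_i})$ and verifying $\mu_i \to^{lde} m_X$. Local and doubly empirical convergence splits into two requirements: $\mu_i \to^{le} m_X$ (local and empirical convergence) together with $\mu_i \otimes \mu_i \to^{le} m_X \otimes m_X$; the second amounts to a decorrelation property saying that for most pairs $(j,k)$, the $(j,k)$-marginal of $\mu_i$ factors as a product close to $m_X \otimes m_X$. For case (i), I take $\mu_i$ to be the pushforward of the product measure $\nu^{\otimes d_i}$ on $A^{d_i}$ (where $X = A^G$ with base measure $\nu$) along the sofic-equivariant map $\omega \mapsto (\phi_\omega(j))_{j=1}^{d_i}$ given by $\phi_\omega(j)(g) = \omega(\sigma_i(g^{-1})j)$. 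Since $\nu^{\otimes d_i}$ is itself a product measure, decorrelation at distant $(j,k)$ is automatic once the $\sigma_i$-orbits of $j$ and $k$ under a finite set of generators become disjoint; the rest is the usual Bowen--Austin calculation carried out in \cite{AustinAdd}.

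For cases (ii) and (iii), I recall the microstate construction used in \cite{Me5}: viewing $\sigma_i$ as inducing a map $M_{m,n}(\C(G)) \to M_{md_i,nd_i}(\C)$, one builds $\mu_i$ as a Gaussian-type measure on $X_f^{d_i}$ whose covariance is a regularization of $(\sigma_i(f)^* \sigma_i(f))^{-1}$ (with the appropriate modifications for the injective vs. dense-image cases), or pushforwards thereof. The density-image / injectivity hypothesis on $\lambda(f)$ supplies the spectral bounds needed for local weak$^*$ convergence. For the doubly empirical upgrade, the point is that the covariance operator is approximately local under the sofic approximation (because $f$ is finitely supported on $G$), so its $(j,k)$-block is nearly a direct sum for most distant pairs, which yields the required decorrelation.

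For case (iv), I adapt the Gaboriau--Seward construction. Since $X$ is profinite and the homoclinic group $\Delta$ is dense in $X$, one chooses an increasing exhaustion of $\Delta$ by finitely generated subgroups $\Delta_n$ whose images become dense in finer and finer finite quotients of $X$. Transfer $\Delta_n$ into $X^{d_i}$ via a $\sigma_i$-equivariant lift of a base point and take $\mu_i$ to be the Haar measure on the resulting finite subgroup of $X^{d_i}$, with $n = n(i) \to \infty$ slowly. Local weak$^*$ convergence follows from the density of $\Delta_n$ in finite quotients of $X$; the doubly empirical refinement follows from the decay of homoclinic elements -- two distant $\sigma_i$-coordinates are acted on essentially independently by the truncations of $\Delta_n$ that enter the construction.

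The main obstacle is the decorrelation step in cases (ii)--(iv), since the prior literature only required the singly empirical version. In each case this should reduce to a quantitative locality statement about the construction -- Gaussian covariance decay for \cite{Me5}, homoclinic decay for \cite{GabSew} -- combined with the fact that for most $(j,k)$, the sofic approximation makes the pair behave independently under the finite set of group elements that enter the construction. Packaging these decorrelation estimates uniformly across test functions in $C(X \times X)$ (and not merely simple tensors) is what will require the most care.
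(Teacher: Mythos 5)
Your proposal identifies the doubly empirical upgrade as ``the main obstacle'' in cases (ii)--(iv) and then leaves it as a plan (``this should reduce to a quantitative locality statement\dots what will require the most care''), so the hardest step is not actually carried out. This is a genuine gap, and it is one the paper avoids entirely by a soft observation you are missing: for an action of $G$ by automorphisms on a compact group, ergodicity implies weak mixing (Lemma \ref{L:autwq}, proved via Schur's lemma and the decomposition of irreducible representations of $X\times X$), and by Austin's general results (Corollary 5.7 and Lemma 5.15 of \cite{AustinAdd}, packaged here as Corollary \ref{C:dqfacts}(\ref{I:wmdq}) and Corollary \ref{C:corwmdq}) a sequence $\mu_i$ that is asymptotically supported on \emph{topological} microstates and locally weak$^{*}$ converges to an ergodic $\mu$ automatically satisfies $\mu_i\to^{le}\mu$, and if $\mu$ is moreover weakly mixing then $\mu_i\to^{lde}\mu$ for free. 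Since the actions in (ii)--(iv) are known to be ergodic (by \cite{LiSchmidtPet} for $X_f$ with $\lambda(f)$ of dense image, by Theorem 4.1 of \cite{BowenLi} for dense homoclinic group, and (iii) reduces to (ii) by rank--nullity for von Neumann dimension), one only needs the sequences already implicit in \cite{Me5} and \cite{GabSew}, with no new decorrelation estimates whatsoever. Your route, by contrast, would require genuinely new quantitative work (Gaussian covariance decay, homoclinic decay, uniformity over $C(X\times X)$) that does not appear in the cited literature.

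A secondary but substantive point: your reformulation of the doubly empirical condition is not correct. You describe it as ``for most pairs $(j,k)$, the $(j,k)$-marginal of $\mu_i$ factors as a product close to $m_X\otimes m_X$.'' But $\mu_i\otimes\mu_i\to^{le}\mu\otimes\mu$ is a condition on the product measure on $(X\times X)^{d_i}$ under the identification $(\phi,\psi)\mapsto\bigl(j\mapsto(\phi(j),\psi(j))\bigr)$: the local weak$^{*}$ part concerns the single-coordinate marginals $(\mathcal{E}_j)_*\mu_i\otimes(\mathcal{E}_j)_*\mu_i$ (automatically products), and the empirical part requires that for $\mu_i\otimes\mu_i$-most pairs of \emph{independent samples} $(\phi,\psi)$ the joint empirical distribution $(\phi\otimes\psi)_*(u_{d_i})$ is close to $\mu\otimes\mu$. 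This is not a statement about two-coordinate marginals of a single sample, and building your decorrelation argument around the wrong object would derail cases (ii)--(iv) even if the estimates you envision were available. Case (i) is fine as you describe it (it is Lemma 5.11 of \cite{AustinAdd}), but for the remaining cases you should replace the quantitative program with the ergodicity-plus-weak-mixing argument.
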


    Consequently, we will see in  \ref{C:dqintro} that Theorem \ref{T:mainintro} recovers all previously known results on equality of topological and measure-theoretic entropy in the nonamenable case. In the ergodic case we can say more about the entropy of $G\actson (X,m_{X}),$ and we show that the local and doubly empirical model-measure sofic entropy introduced in \cite{AustinAdd} agrees with the topological entropy and measure-theoretic entropy previously defined in \cite{Bow},\cite{KLi}. We use $h_{(\sigma_{i})_{i},\mu}^{lde}(X,G)$ for the local and doubly empirical convergence model-measure sofic entropy defined in \cite{AustinAdd} (there it is called doubly quenched model-measure sofic entropy and denoted $h^{dq}_{\Sigma}(\mu)$).

\begin{thm}\label{T:dqintroergcase} Let $G$ be a countable, discrete, sofic group with sofic approximation $\sigma_{i}\colon G\to S_{d_{i}}.$ Let $X$ be a compact, metrizable group with $G\actson X$ by automorphisms. If $G\actson (X,m_{X})$ is ergodic, and if there exists a sequence $\mu_{i}\in \Prob(X^{d_{i}})$ with $\mu_{i}\to^{lw^{*}}m_{X},$ then
\[h_{(\sigma_{i})_{i},m_{X}}^{lde}(X,G)=h_{(\sigma_{i})_{i},m_{X}}(X,G)=h_{(\sigma_{i})_{i},\topo}(X,G).\]
\end{thm}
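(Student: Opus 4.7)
The plan is to prove the chain by establishing $h_{(\sigma_i)_i, \topo}(X,G) \leq h^{lde}_{(\sigma_i)_i, m_X}(X,G)$, since the reverse inequalities $h^{lde}_{(\sigma_i)_i, m_X}(X,G) \leq h_{(\sigma_i)_i, m_X}(X,G) \leq h_{(\sigma_i)_i, \topo}(X,G)$ are general: the first comes from the definition of local and doubly empirical model-measure sofic entropy in \cite{AustinAdd} (every lde model is in particular a measure-theoretic model in Bowen's sense), and the second is the sofic variational principle of Kerr-Li.

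The core of the argument will be to construct a sequence of model measures $\nu_i \in \Prob(X^{d_i})$ with $\nu_i \to^{lde} m_X$ whose Austin-type model-measure entropy approaches $h_\topo$. The natural candidate is, for a finite $F \subseteq G$ and $\delta > 0$, an approximately Haar-distributed measure on the topological microstate space $\Omega_i := \Map(\sigma_i, F, \delta) \subseteq X^{d_i}$. Because $X$ is a compact group, there is a natural reference measure $m_X^{d_i}$ on $X^{d_i}$, and because the hypothesis supplies some sequence $\mu_i \to^{lw^*} m_X$, the space $\Omega_i$ is nonempty and has enough mass to admit a nontrivial normalized restriction.

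Two things must be verified. First, the entropy of $\nu_i$, when normalized by $d_i$, should approach $h_\topo$ along the directed system of choices $(F, \delta)$; this follows from the standard volumetric description of topological sofic entropy for compact group actions. Second, one must show $\nu_i \to^{lde} m_X$, which decomposes into (a) local weak-$*$ convergence of the marginals of $\nu_i$ to $m_X$, (b) concentration of $\nu_i$ on microstates for $G \cc (X,m_X)$, and (c) the analogs of (a), (b) for $\nu_i \otimes \nu_i$ with target the Haar measure $m_{X \times X}$ on $X \times X$ under the diagonal $G$-action. Items (b) and (c) are essentially automatic from the construction, since $\Omega_i$ is by definition a set of microstates and since products of microstates for $G \cc X$ are microstates for the diagonal action of $G$ on $X \times X$, with $m_{X \times X} = m_X \otimes m_X$ being preserved by $G$.

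The principal obstacle is item (a): showing that the coordinate marginals of the Haar-on-$\Omega_i$ measure approach $m_X$. This is where the ergodicity of $G \cc (X,m_X)$ is needed in an essential way, through the fact that $m_X$ is the unique $G$-invariant Borel probability measure invariant under the homoclinic group, and more concretely through the combination of ergodicity with the existence of the given $\mu_i \to^{lw^*} m_X$ to rule out pathological concentrations of mass on proper closed $G$-invariant subgroups. A secondary route, once (a)--(c) are established, is to invoke Theorem \ref{T:mainintro} with $Y = \{1\}$ directly to obtain $h_{(\sigma_i)_i, m_X}(X,G) = h_{(\sigma_i)_i, \topo}(X,G)$, and then use the entropy lower bound from the construction to pin $h^{lde}_{(\sigma_i)_i, m_X}(X,G)$ at the same value.
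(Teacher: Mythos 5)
Your reduction to the single inequality $h_{(\sigma_{i})_{i},\topo}(X,G)\leq h_{(\sigma_{i})_{i},m_{X}}^{lde}(X,G)$ is the same as the paper's, but the measure you propose to witness it --- the normalized restriction of $m_{X}^{d_{i}}$ to $\Omega_{i}=\Map(\rho,F,\delta,\sigma_{i})$ --- does not work, and the two places where your argument is thinnest are exactly where it breaks. First, there is no ``standard volumetric description'' of topological sofic entropy for general actions on compact groups that would let you conclude $S_{\varepsilon,\delta}(\nu_{i},\rho_{2})$ grows like $N_{\varepsilon}(\Omega_{i},\rho_{2})$: for a normalized restriction the natural lower bound on $S_{\varepsilon}(A,\rho_{2})$ for a set $A$ of large $\nu_{i}$-measure is $m_{X}^{d_{i}}(\Omega_{i})$ divided by the Haar volume of an $\varepsilon$-ball, and this ratio can be far smaller than the separation number of $\Omega_{i}$ (the microstate space is an open but typically very ``thin'' set). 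Second, item (a), which you correctly flag as the principal obstacle, is left unresolved: ergodicity and uniqueness of the Haar measure do not by themselves control the coordinate marginals of a Haar restriction to $\Omega_{i}$, and the given sequence $\mu_{i}\to^{lw^{*}}m_{X}$ plays no role in your construction beyond non-emptiness of $\Omega_{i}$. Your items (b) and (c) are also not ``automatic'': membership in $\Omega_{i}$ only makes a point a \emph{topological} microstate, and upgrading to measure-theoretic microstates requires Corollary \ref{C:dqfacts}(\ref{I:wmdq}) (hence requires (a) first), while the doubly empirical statement for $\nu_{i}\otimes\nu_{i}$ needs weak mixing of $G\actson(X,m_{X})$, which is supplied by Lemma \ref{L:autwq} and is not a consequence of ergodicity for general actions.

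The paper's proof (Theorem \ref{T:ergdqeqldahglahdg}) avoids all of this with a convolution: take $S_{i_{n}}$ a maximal $\varepsilon/2$-separated subset of $\Map(\rho,F_{n},\delta_{n},\sigma_{i_{n}})$ with respect to a \emph{bi-invariant} metric $\rho$, and consider $u_{S_{i_{n}}}*\mu_{i_{n}}$. Lemma \ref{L:dqconv} (resting on Lemma \ref{L:dqconv2}, Austin's product theorem via Corollary \ref{C:dqfacts}(\ref{I:productdq}), and Lemma \ref{L:convolutionconvergence}) shows this convolution still satisfies $u_{S_{i_{n}}}*\mu_{i_{n}}\to^{lde}m_{X}$ --- the convolution with $\mu_{i}$ is precisely what makes your obstacle (a) disappear, since convolving anything with a measure close to Haar yields a measure close to Haar --- while bi-invariance of $\rho$ and Fubini show that any set of large $u_{S_{i_{n}}}*\mu_{i_{n}}$-measure contains a translate of most of $S_{i_{n}}$ and is therefore $\varepsilon/2$-separated in cardinality at least $(1-\delta)|S_{i_{n}}|$. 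That is the entropy lower bound you are missing. If you want to salvage your plan, replace the Haar restriction by this convolution; the rest of your outline then goes through.
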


Theorem \ref{T:dqintroergcase} says, under the assumption of ergodicity, once one knows the existence of some sequence of measures which locally weak$^{*}$ converge to the Haar measure, then we are able to promote this sequence to  a sequence of measures which locally and doubly empirically converge to the Haar measure and have good enough separation properties to compute the local and doubly empirical model-measure entropy. 

	Let us briefly outline the proof of Theorem \ref{T:dqintroergcase}. Consider a sequence $\mu_{i}\in \Prob(X^{d_{i}})$ which locally weak$^{*}$ converges to the Haar measure. One first makes a minor observation that our ergodicity assumption shows that in fact  $\mu_{i}$ locally and doubly empirically converges to the Haar measure (this is because ergodicity and weak mixing are equivalent for actions on compact groups). We consider a convolution of $\mu_{i}$ with the uniform measure of a well separated subset of $S_{i}\subseteq X^{d_{i}},$ where each element of $S_{i}$ is a good microstate for the topological entropy.  As is typical with convolution arguments, this new sequence inherits the good properties of each measure. This new sequence of measures has the separation properties that $u_{S_{i}}$ has, and Austin's results in \cite{AustinAdd} on behavior under products show  that the convolution of $\mu_{i}$ with the uniform measure of $S_{i}$ still locally and doubly empirically converges to the Haar measure. The fact that  $\mu_{i}*u_{S_{i}}$ has good separation properties, and that $\mu_{i}*u_{S_{i}}\to^{lde}m_{X},$ allows us to show the equality of all the entropies involved. We mention that this argument may be regarded as an extension of Berg's argument in \cite{Berg} to the nonamenable setting.
	
	Using Proposition \ref{P:mainexamples}, we have the following corollary which  reproves all previously known results regarding equality of topological and measure-theoretic entropy for actions on compact groups. Moreover, we can also equate these entropies to Austin's local and doubly empirical convergence model-measure entropy.
\begin{cor}\label{C:dqintro} Let $G$ be a countable, discrete, sofic group with sofic approximation $\sigma_{i}\colon G\to S_{d_{i}}.$ Let $X$ be a compact, metrizable group with $G\actson X$ by automorphisms. Suppose that one of the following cases hold:
\begin{enumerate}[(i):]
\item $X=X_{f}$ for some $f\in M_{m,n}(\Z(G))$ with $\lambda(f)$ having dense image,
\item $X=X_{f}$ for some $f\in M_{n}(\Z(G))$ with $\lambda(f)$ injective,
\item $X$ is a profinite group and $G\actson X$ has a dense homoclinic group,
\end{enumerate}
then
\[h_{(\sigma_{i})_{i},m_{X}}^{lde}(X,G)=h_{(\sigma_{i})_{i},m_{X}}(X,G)=h_{(\sigma_{i})_{i},\topo}(X,G).\]
\end{cor}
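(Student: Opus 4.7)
The plan is to deduce Corollary~\ref{C:dqintro} by combining Proposition~\ref{P:mainexamples}, Theorem~\ref{T:mainintro}, and Theorem~\ref{T:dqintroergcase}. In each of the three listed cases Proposition~\ref{P:mainexamples} supplies a sequence $\mu_{i}\in \Prob(X^{d_{i}})$ with $\mu_{i}\to^{lde}m_{X}$, and in particular $\mu_{i}\to^{lw^{*}}m_{X}$. Applying Theorem~\ref{T:mainintro} with $Y=\{1\}$ immediately yields
\[
h_{(\sigma_{i})_{i},\topo}(X,G)=h_{(\sigma_{i})_{i},m_{X}}(X,G),
\]
which handles two of the three entropies in the desired triple equality.

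To bring in $h^{lde}_{(\sigma_{i})_{i},m_{X}}(X,G)$, I would invoke Theorem~\ref{T:dqintroergcase}. The local weak$^{*}$ hypothesis is already in hand from the previous step, so what remains is to verify that $G\cc (X,m_{X})$ is ergodic in each case. In case (iii), density of the homoclinic group in a compact group action by automorphisms is classically known to force mixing of the Haar action, and hence ergodicity; this fact is at the heart of the Gaboriau--Seward framework underlying Proposition~\ref{P:mainexamples}(\ref{I:GabSewintro}). In cases (i) and (ii), ergodicity of the algebraic action $G\cc (X_{f},m_{X_{f}})$ is equivalent, via Pontryagin duality, to the absence of nontrivial finite $G$-orbits in the $\Z(G)$-module $\Z(G)^{\oplus n}/r(f)(\Z(G)^{\oplus m})$; dense image of $\lambda(f)$ (case (i)) or injectivity of $\lambda(f)$ (case (ii)) precludes such orbits by standard arguments relating $\lambda(f)$ to the module structure of the dual of $X_{f}$.

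Once ergodicity is verified in each case, Theorem~\ref{T:dqintroergcase} applies and yields
\[
h^{lde}_{(\sigma_{i})_{i},m_{X}}(X,G)=h_{(\sigma_{i})_{i},m_{X}}(X,G)=h_{(\sigma_{i})_{i},\topo}(X,G),
\]
completing the proof. The main subtlety lies in the ergodicity verification for cases (i) and (ii); while this is routine within the theory of algebraic actions, it requires one to pass between the operator-theoretic hypothesis on $\lambda(f)$ and the module-theoretic statement about finite $G$-orbits just described. Beyond this bookkeeping, no new ideas are needed: Corollary~\ref{C:dqintro} is essentially an assembly of the three ingredients cited above.
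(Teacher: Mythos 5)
Your argument is correct, but it takes a detour that the paper avoids. The paper's own proof is a two-line assembly: Proposition \ref{P:nonndqentropyalgexamples} produces a sequence $\mu_{i}\to^{lde}m_{X}$ in each case, and then Theorem \ref{T:ergdqeqldahglahdg} --- whose only hypothesis is the existence of a sequence with $\mu_{i}\to^{le}m_{X}$, with \emph{no} ergodicity assumption --- delivers the full triple equality at once. You instead route the lde-entropy through Theorem \ref{T:dqintroergcase}, which does carry an ergodicity hypothesis, and therefore you must re-verify ergodicity separately in each of the three cases. That verification is sound (the paper records exactly these facts inside the proof of Proposition \ref{P:nonndqentropyalgexamples}, citing Theorem 4.1 of \cite{BowenLi} for case (iii) and \cite{LiSchmidtPet} for case (i), with case (ii) reducing to case (i) by rank-nullity for von Neumann dimension), but it is work the paper has already absorbed into Proposition \ref{P:mainexamples}: there ergodicity is used once, via weak mixing (Lemma \ref{L:autwq} and Corollary \ref{C:corwmdq}), to upgrade local weak$^{*}$ or local and empirical convergence to $\to^{lde}$, and after that point no ergodicity is needed anywhere. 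Your separate appeal to Theorem \ref{T:mainintro} with $Y=\{1\}$ is also redundant, since the theorem you invoke for the lde entropy already returns all three equalities. The only place where your write-up is thinner than it should be is the module-theoretic justification of ergodicity in cases (i) and (ii); as written it is a citation-shaped gesture rather than an argument, and you should either cite \cite{LiSchmidtPet} as the paper does or actually carry out the duality argument. None of this affects correctness.
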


The examples listed in Corollary \ref{C:dqintro} constitute many of the main examples where sofic entropy has been computed and Corollary \ref{C:dqintro} computes the local and doubly empirical convergence model-measure sofic entropy introduced in \cite{AustinAdd} for each of these examples. Hence, Corollary \ref{C:dqintro} brings the computation of local and doubly empirical convergence model-measure sofic entropy up to date with that of the entropy defined by \cite{KLi},\cite{Bow}. Moreover we are able to do this in a relatively painless manner without  having to essentially reproduce the work in \cite{Me5},\cite{GabSew},\cite{KLi},\cite{BowenLi},\cite{BowenEntropy}. For example, as a consequence of Corollary \ref{C:dqintro}, our previous results connecting Fuglede-Kadison determinants and sofic measure entropy (namely Theorem 1.1 of \cite{Me5}) now hold with $h_{(\sigma_{i})_{i},m_{X_{f}}}(X_{f},\Gamma)$ replaced with $h_{(\sigma_{i})_{i},m_{X_{f}}}^{lde}(X,\Gamma)$ throughout.

We make a few remarks about the organization of the paper. Section \ref{S:reform} reformulates Austin's local and  empirical convergence, and his local and doubly empirical convergence model-measure entropy, so that one does not have to work with invariant measures on a shift space, but only with invariant measures on any compact $G$-space. In this section, we also prove Proposition \ref{P:mainexamples} and explains how it implicitly follows from \cite{Bow},\cite{Me5},\cite{GabSew},\cite{AustinAdd}. In Section \ref{S:equality}, we prove Theorems \ref{T:mainintro} and \ref{T:dqintroergcase} (which are the main theorems in the paper). The proofs in this section are basically convolution arguments combined with simple applications of Fubini's theorem.

\textbf{Acknowledgments.} I am indebted to Lewis Bowen and Brandon Seward for invaluable conversations. I thank Bingbing Liang for finding many typographical errors in a previous version of the paper. Part of this work was done while I was visiting the University of California, Los Angeles and I thank the University of California, Los Angeles for their hospitality and providing a stimulating environment in which to work.

\section{Reformulation of Local and Doubly Empirical Convergence}\label{S:reform}

In \cite{AustinAdd}, Austin defined the notion of local and empirical convergence (there called ``quenched convergence'') for invariant measures of $G\actson X^{G}$ where $G$ is a countable, discrete group, $X$ is a compact metrizable space, and the action is by left shifts. In this section, we explain how to modify the definition so that it works for invariant measures for an arbitrary action $G\actson Y$ where $Y$ is any compact, metrizable space. We  begin by recalling some necessary definitions and notation. For a set $A$ and $n\in\N,$ we identify $A^{n}$ with all functions $\{1,\dots,n\}\to A.$ For a finite set $A,$ we use $u_{A}$ for the uniform measure on $A,$ if $A=\{1,\dots,n\}$ for some $n\in\N$ we use $u_{n}$ instead of $u_{\{1,\dots,n\}}.$ We let $S_{n}$ be the group of all bijections $\{1,\dots,n\}\to \{1,\dots,n\}.$
\begin{defn}\label{D:sofic} Let $G$ be a countable, discrete group. A sequence of maps $\sigma_{i}\colon G\to S_{d_{i}}$ is said to be a \emph{sofic approximation} if:
\begin{itemize}
\item for every $g,h\in G$ we have $u_{d_{i}}(\{j:\sigma_{i}(g)\sigma_{i}(h)(j)=\sigma_{i}(gh)(j)\})\to 1,$
\item for every $g\in G\setminus\{e\}$ we have $u_{d_{i}}(\{j:\sigma_{i}(g)(j)=j\})\to 0.$
\end{itemize}
We say that $G$ is \emph{sofic} if it has a sofic approximation.

\end{defn}

The first item in the definition of sofic approximation may be thought of as saying that $G$ has an ``almost action" on the finite set $\{1,\dots,d_{i}\}$ (in the sense that the points on which the action axiom $\sigma_{i}(gh)(j)=\sigma_{i}(g)\sigma_{i}(h)(j)$ fails is neglible), the second item says that this action is ``almost free." Thus, sofic groups may be regarded as a perturbation of finite groups, because finite groups are precisely the groups which have free actions on finite sets.

We recall the definition of the topological and measure-theoretic microstates space. Suppose we are given a countable, discrete group $G$ and a compact, metrizable space $X$ with $G\actson X$ by homeomorphisms. A pseudometric $\rho$ on $X$ is said to be dynamically generating if for all $x\ne y$ with $x,y\in X,$ there is a $g\in G$ with $\rho(gx,gy)>0.$
If $\rho$ is as above and $n\in \N,$ we define $\rho_{2}$ on $X^{n}$ by
\[\rho_{2}(x,y)^{2}=\frac{1}{n}\sum_{j=1}^{n}\rho(x(j),y(j))^{2}.\]

\begin{defn}\label{D:topmicrospace}Let $G$ be a countable, discrete, sofic group with sofic approximation $\sigma_{i}\colon G\to S_{d_{i}}.$ Suppose that $X$ is a compact, metrizable space with $G\actson X$ by homeomorphisms, and that $\rho$ is a dynamically generating pseudometric on $X$. For a finite $F\subseteq G,$ and a $\delta>0,$ we let $\Map(\rho,F,\delta,\sigma_{i})$ be all $\phi\in X^{d_{i}}$ so that
\[\rho_{2}(g\phi,\phi\circ \sigma_{i}(g))<\delta, \mbox{ for all $g\in F.$}\]
\end{defn}

We think of $\Map(\rho,F,\delta,\sigma_{i})$ as a space of ``topological microstates'' in the sense that $\phi\in \Map(\rho,F,\delta,\sigma_{i})$ gives a finitary model of $G\actson X$ (i.e. it is approximately equivariant with approximate being measured in the topology on $X$). For a compact, metrizable space $X,$  we let $\Prob(X)$ be the space of Borel probability measures on $X.$ For a finite $L\subseteq C(X),$ and $\delta>0,$ set
\[U_{L,\delta}(\mu)=\bigcap_{f\in L}\left\{\nu\in \Prob(X):\left|\int_{X}f\,d\mu-\int_{X}f\,d\nu\right|<\delta\right\}.\]
Note that $U_{L,\delta}(\mu)$ ranging over all $L,\delta$ gives a basis of neighborhoods of $\mu$ in the weak-$^{*}$ topology on $\Prob(X).$

\begin{defn}\label{D:measmicrospace}Let $G$ be a countable, discrete, sofic group with sofic approximation $\sigma_{i}\colon G\to S_{d_{i}}.$ Suppose that $X$ is a compact, metrizable space, that $G\actson X$ by homeomorphisms, and that $\rho$ is a dynamically generating pseudometric. For finite $F\subseteq G,L\subseteq C(X),$ and $\delta>0$ set
\[\Map_{\mu}(\rho,F,L,\delta,\sigma_{i})=\{\phi\in \Map(\rho,F,\delta,\sigma_{i}):\phi_{*}(u_{d_{i}})\in U_{L,\delta}(\mu)\}.\]
\end{defn}
We think of $\Map_{\mu}(\rho,F,L,\delta,\sigma_{i})$ as a space of ``measure-theoretic microstates'' for $G\actson (X,\mu).$

Given a compact, metrizable space $X$ and a countable discrete group with $G\actson X$ by homeomorphisms, we let $\Prob_{G}(X)$ be the set of all $\mu\in \Prob(X)$ which are $G$-invariant.

\begin{defn}\label{D:quenched} Let $G$ be a countable, discrete, sofic group with sofic approximation $\sigma_{i}\colon G\to S_{d_{i}}.$ Let $X$ be a compact, metrizable space with $G\cc X$ by homeomorphisms and let $\rho$ be a dynamically generating pseudometric on $X.$ We say that a sequence $\mu_{i}\in \Prob(X^{d_{i}})$ locally and empirically converges to $\mu\in \Prob_{G}(X)$ if for all finite $F\subseteq G, L\subseteq C(X),$ and $\delta>0$ we have
\[\mu_{i}\left(\Map_{\mu}(\rho,F,L,\delta,\sigma_{i})\right)\to 1,\]
\[\min_{f\in L}u_{d_{i}}\left(\left\{1\leq j\leq d_{i}:\left|\int_{X}f\,d\mu-\int_{X^{d_{i}}}f(\phi(j))\,d\mu_{i}(\phi)\right|<\delta\right\}\right)\to 1.\]
We write $\mu_{i}\to^{le}\mu$ to mean that $\mu_{i}$ locally and empirically converges to $\mu.$
\end{defn}

We give a proposition which states a few equivalent ways to say that a sequence $\mu_{i}\in \Prob(X^{d_{i}})$ locally and empirically converges to $\mu.$ This proof is simple and  is left to the reader, we will not actually need it in this paper and mainly state it to give the reader added intuition as to the meaning of local and empirical convergence. Let $X$ be a set and $n\in \N,$ define, for $1\leq j\leq n,$ a map $\mathcal{E}_{j}\colon X^{n}\to X$ by $\mathcal{E}_{j}(x)=x(j).$ 

\begin{prop}Let $G$ be a countable, discrete, sofic group with sofic approximation $\sigma_{i}\colon G\to S_{d_{i}}.$ Let $X$ be a compact, metrizable space with $G\cc X$ by homeomorphisms and fix a $\mu\in \Prob_{G}(X).$ Fix a dynamically generating pseudometric $\rho$ on $X.$ For a given sequence $\mu_{i}\in \Prob(X^{d_{i}}),$  the following are equivalent:
\begin{enumerate}[(i)]
\item $\mu_{i}\to^{le}\mu,$\\
\item For any neighborhood $\mathcal{O}$ of $\mu$ in the weak-$^{*}$ topology, for any neighborhood $\mathcal{V}$ of the diagonal in $X\times X$,  and any $g\in G$ we have
\[u_{d_{i}}(\{j:(\mathcal{E}_{j})_{*}\mu_{i}\in \mathcal{O}\})\to 1,\]
\[\mu_{i}(\{x:x_{*}(u_{d_{i}})\in \mathcal{O}\})\to 1,\]
\[\mu_{i}\otimes u_{d_{i}}(\{(x,j):(gx(j),x(\sigma_{i}(g)(j)))\in \mathcal{V}\})\to 1.\]

\item For any neighborhood $\mathcal{O}$ of $\mu$  in the weak-$^{*}$ topology and for any $g\in G$ we have
\[u_{d_{i}}(\{j:(\mathcal{E}_{j})_{*}\mu_{i}\in \mathcal{O}\})\to 1,\]
\[\mu_{i}(\{x:x_{*}(u_{d_{i}})\in \mathcal{O}\})\to 1,\]
\[\int \rho(gx(j),x(\sigma_{i}(g)(j)))\,d\mu_{i}\otimes u_{d_{i}}(x,j)\to 0.\]
\end{enumerate}

\end{prop}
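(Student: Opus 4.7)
The plan is to observe that, after unwinding weak-$^{*}$ neighborhood bases, two of the three clauses in each of (i), (ii), and (iii) are literally the same and so match up automatically. Indeed, the clause $u_{d_{i}}(\{j : (\mathcal{E}_{j})_{*}\mu_{i}\in \mathcal{O}\})\to 1$, as $\mathcal{O}$ ranges over weak-$^{*}$ neighborhoods of $\mu$, is just a rephrasing of the ``$\min_{f\in L} u_{d_{i}}(\dots)\to 1$'' clause in Definition \ref{D:quenched} (with the ``$\min$'' coming from finite intersection over basic neighborhoods $U_{L,\delta}(\mu)$), and the clause $\mu_{i}(\{x : x_{*}(u_{d_{i}})\in \mathcal{O}\})\to 1$ is exactly the ``$\phi_{*}(u_{d_{i}})\in U_{L,\delta}(\mu)$'' part of $\mu_{i}(\Map_{\mu}(\rho,F,L,\delta,\sigma_{i}))\to 1$. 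Thus the whole proposition reduces to proving the equivalence of three formulations of approximate equivariance, namely (EQ-i) $\mu_{i}(\{\phi:\rho_{2}(g\phi,\phi\circ\sigma_{i}(g))<\delta\})\to 1$ for every $g\in G$ and $\delta>0$; (EQ-ii) $(\mu_{i}\otimes u_{d_{i}})(\{(x,j):(gx(j),x(\sigma_{i}(g)(j)))\in \mathcal{V}\})\to 1$ for every $g\in G$ and every neighborhood $\mathcal{V}$ of the diagonal in $X\times X$; and (EQ-iii) $\int \rho(gx(j),x(\sigma_{i}(g)(j)))\,d(\mu_{i}\otimes u_{d_{i}})(x,j)\to 0$ for every $g\in G$. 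I will establish (EQ-i) $\Leftrightarrow$ (EQ-iii) and (EQ-ii) $\Leftrightarrow$ (EQ-iii) and then chain by transitivity.

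For (EQ-i) $\Leftrightarrow$ (EQ-iii), I would apply Fubini to rewrite
\[
\int \rho_{2}(g\phi,\phi\circ\sigma_{i}(g))^{2}\,d\mu_{i}(\phi) \;=\; \int \rho(gx(j),x(\sigma_{i}(g)(j)))^{2}\,d(\mu_{i}\otimes u_{d_{i}})(x,j),
\]
and then use Markov's inequality combined with the boundedness of $\rho$ on the compact space $X$ to pass between pointwise-in-$\phi$ smallness of $\rho_{2}$ and $L^{1}$-smallness of $\rho$ on the product $X^{d_{i}}\times\{1,\dots,d_{i}\}$.

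For (EQ-ii) $\Leftrightarrow$ (EQ-iii), the direction (EQ-ii) $\Rightarrow$ (EQ-iii) is easy: since $\rho$ is continuous, $\mathcal{V}_{\epsilon}:=\{(a,b):\rho(a,b)<\epsilon\}$ is an open neighborhood of the diagonal, and applying (EQ-ii) to $\mathcal{V}_{\epsilon}$ together with the boundedness of $\rho$ yields $\limsup_{i}\int \rho\,d(\mu_{i}\otimes u_{d_{i}})\leq \epsilon$ for every $\epsilon>0$. The hard part will be (EQ-iii) $\Rightarrow$ (EQ-ii), because smallness of $\rho$-distance does not by itself imply topological closeness in $X\times X$ ($\rho$ being only a pseudometric). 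To handle this, I would use compactness of $(X\times X)\setminus \mathcal{V}$ together with the dynamically generating hypothesis to extract $h_{1},\dots,h_{k}\in G$ and $\epsilon_{0}>0$ such that any $(a,b)\notin \mathcal{V}$ satisfies $\rho(h_{l}a,h_{l}b)>\epsilon_{0}$ for some $l$. I would then bound $\rho(h_{l}\cdot gx(j),h_{l}\cdot x(\sigma_{i}(g)(j)))$ via the triangle inequality through the intermediate point $x(\sigma_{i}(h_{l}g)(j))$, reducing (after the measure-preserving change of variable $j\mapsto \sigma_{i}(g)(j)$ and the sofic approximation identity $\sigma_{i}(h_{l}g)(j)=\sigma_{i}(h_{l})\sigma_{i}(g)(j)$, valid for most $j$) to (EQ-iii) applied to $h_{l}g$ and separately to $h_{l}$. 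A union bound over $l$ together with Markov's inequality then closes the argument.
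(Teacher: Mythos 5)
Your proof is correct. Note that the paper itself supplies no argument for this proposition --- it is explicitly left to the reader and never used later --- so there is nothing to compare against line by line; what can be said is that your toolkit is exactly the one the paper deploys in its Lemma 2.6 and Lemma 2.7 (Chebyshev/Markov to pass between measure bounds and $L^{1}$ bounds, soficity to replace $\sigma_{i}(h_{l}g)$ by $\sigma_{i}(h_{l})\sigma_{i}(g)$ off a set of density $o(1)$, and the change of variable $j\mapsto\sigma_{i}(g)(j)$ under $u_{d_{i}}$). You correctly isolate the only genuinely delicate point, namely (EQ-iii) $\Rightarrow$ (EQ-ii): since $\rho$ is merely a dynamically generating pseudometric, $\rho$-closeness does not give topological closeness, and your compactness argument --- covering $(X\times X)\setminus\mathcal{V}$ by finitely many sets on each of which some $\rho(h_{l}\cdot,h_{l}\cdot)$ is bounded below by $\epsilon_{0}$ --- is the right fix, after which the triangle inequality through $x(\sigma_{i}(h_{l}g)(j))$ closes the loop. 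The only cosmetic caveat is that one should take $\mathcal{V}$ open without loss of generality so that its complement is compact; with that noted, the argument is complete.
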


A priori, the definition of local and empirical convergence depends upon a choice of dynamically generating pseudometric (the preceding proposition actually shows independence but we will give a different proof). We will ignore this  because we will show that our definition of  local and empirical convergence agrees with the definition of quenched convergence of \cite{AustinAdd}. It is clear that the definition of quenched convergence (resp. doubly quenched convergence) given in \cite{AustinAdd} only depends upon the topology of $X,$ but in Corollary 5.18 of \cite{AustinAdd} it is in fact shown that it only depends upon the measure-theoretic nature of $G\actson (X,\mu).$  We now restate the definition of quenched convergence defined in \cite{AustinAdd}. Let $G$ be a countable, discrete, sofic group $G$ with a sofic approximation $\sigma_{i}\colon G\to S_{d_{i}}.$ For $x\in X^{d_{i}},$ we define $\phi_{x}\colon \{1,\dots,d_{i}\}\to X^{G}$ by
\[(\phi_{x}(j))(g)=x(\sigma_{i}(g)^{-1}(j)),\mbox{ for $1\leq j\leq d_{i},g\in G$}.\]
\begin{defn}[\cite{AustinAdd} Definition 5.3] Let $G$ be a countable, discrete, sofic group with sofic approximation $\sigma_{i}\colon G\to S_{d_{i}}.$ Let $X$ be a compact, metrizable, space and let $G\cc X^{G}$ by the left shift action given by $(gx)(h)=x(g^{-1}h)$.  Fix a $\mu\in \Prob_{G}(X^{G}).$ We say that a sequence $\mu_{i}\in \Prob(X^{d_{i}})$\emph{ quenched converges to $\mu$} (in the sense of \cite{AustinAdd}) if given any neighborhood $\mathcal{O}$ of $\mu$ in the weak$^{*}$ topology on $\Prob(X^{G})$ we have
\[\mu_{i}(\{x:(\phi_{x})_{*}(u_{d_{i}})\in \mathcal{O}\})\to 1,\]
\[u_{d_{i}}\left(\{j:(\mathcal{E}_{j})_{*}(\mu_{i})\in \mathcal{O}\}\right)\to 1.\]

\end{defn}

We show that our notion of local and empirical convergence is related to that of quenched convergence of \cite{AustinAdd}. We need the following lemma (which will also be used later).

\begin{lem}\label{L:transferlemma} Let $G$ be a countable discrete sofic group with sofic approximation $\sigma_{i}\colon G\to S_{d_{i}}.$ Let $X$ be a compact, metrizable space with $G\cc X$ by homeomorphisms, and let $\rho$ be a dynamically generating pseudometric on $X.$  Define $\Psi\colon X\to X^{G}$ by $\Psi(x)(g)=g^{-1}x.$

(i): For any neighborhood $\mathcal{O}$ of $\Psi_{*}(\mu)$ in the weak-$^{*}$ topology on $\Prob(X^{G}),$ there exist finite $F\subseteq G,L\subseteq C(X),$ and $\delta>0$ so that for all sufficiently large $i,$ and all $x\in \Map_{\mu}(\rho,F,L,\delta,\sigma_{i}),$ we have $(\phi_{x})_{*}(u_{d_{i}})\in \mathcal{O}.$

(ii): Given any finite $F\subseteq G,L\subseteq C(X),$ and $\delta>0,$ there is a neighborhood $\mathcal{O}$ of $\Psi_{*}(\mu)$ in the weak$^{*}$-topology so that if $i$ is sufficiently large, then for every $x\in X^{d_{i}}$ with $(\phi_{x})_{*}(u_{d_{i}})\in \mathcal{O},$ we have $x\in \Map_{\mu}(\rho,F,\delta,L,\sigma_{i}).$

\end{lem}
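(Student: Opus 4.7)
The approach is to translate between weak-$*$ closeness of $(\phi_{x})_{*}(u_{d_{i}})$ to $\Psi_{*}(\mu)$ and the finitary conditions defining $\Map_{\mu}(\rho,F,L,\delta,\sigma_{i})$ by evaluating both measures on well-chosen continuous cylinder functions on $X^{G}$; such cylinder functions generate $C(X^{G})$ via Stone--Weierstrass and hence determine the weak-$*$ topology near $\Psi_{*}(\mu)$.

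For part (ii), I would let $\mathcal{O}$ control the integrals of two families of cylinder functions, $\tilde{f}(\xi)=f(\xi(e))$ for $f\in L$ and $\tilde{\rho}_{g}(\xi)=\rho(\xi(e),g\xi(g))^{2}$ for $g\in F$. Both are continuous on $X^{G}$ since $\rho$ is a continuous pseudometric and each $g$ acts by homeomorphism. One computes $\int \tilde{f}\,d\Psi_{*}(\mu)=\int f\,d\mu$ and $\int \tilde{f}\,d(\phi_{x})_{*}(u_{d_{i}})=\int f\,dx_{*}(u_{d_{i}})$ (using $\sigma_{i}(e)=\id$), together with
\[
\int \tilde{\rho}_{g}\,d\Psi_{*}(\mu)=\int \rho(y,g\cdot g^{-1}y)^{2}\,d\mu(y)=0,
\]
while the substitution $j\mapsto\sigma_{i}(g)^{-1}(j)$ reveals that $\int \tilde{\rho}_{g}\,d(\phi_{x})_{*}(u_{d_{i}})=\rho_{2}(gx,x\circ\sigma_{i}(g))^{2}$. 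Choosing $\mathcal{O}$ to control these finitely many integrals with errors below $\min(\delta,\delta^{2})$ therefore forces $x\in\Map_{\mu}(\rho,F,L,\delta,\sigma_{i})$ for all large $i$.

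For part (i), by Stone--Weierstrass I may assume $\mathcal{O}$ is cut out by finitely many cylinder functions $F_{k}(\xi)=\prod_{g\in F'_{k}}f_{g,k}(\xi(g))$, with $F'_{k}\subseteq G$ finite and $f_{g,k}\in C(X)$. One has
\[
\int F_{k}\, d\Psi_{*}(\mu)=\int\prod_{g\in F'_{k}}f_{g,k}(g^{-1}y)\,d\mu(y), \qquad \int F_{k}\, d(\phi_{x})_{*}(u_{d_{i}})=\frac{1}{d_{i}}\sum_{j=1}^{d_{i}}\prod_{g\in F'_{k}}f_{g,k}(x(\sigma_{i}(g)^{-1}j)).
\]
The heart of the argument, and the main obstacle, is to replace each factor $f_{g,k}(x(\sigma_{i}(g)^{-1}j))$ by $f_{g,k}(g^{-1}x(j))$ up to small error on a set of $j$'s of density close to $1$. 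This is where dynamical generation of $\rho$ enters, combined with the almost multiplicativity of the sofic approximation. Concretely, dynamical generation implies that $f_{g,k}\circ g^{-1}\in C(X)$ is uniformly continuous with respect to the pseudometric $(y,z)\mapsto\max_{h'\in F''}\rho(h'y,h'z)$ for a sufficiently large finite $F''\subseteq G$, so it suffices to show $\rho(h'x(j),h'gx(\sigma_{i}(g)^{-1}j))$ is small for each $h'\in F''$ on a set of $j$'s of density close to $1$. Almost equivariance (encoded in $F$, taken to contain all needed $h'$ and $h'g$) lets one replace these arguments by $x(\sigma_{i}(h')j)$ and $x(\sigma_{i}(h'g)\sigma_{i}(g)^{-1}(j))$ on most $j$, and the approximate identity $\sigma_{i}(h'g)\sigma_{i}(g)^{-1}(j)=\sigma_{i}(h')(j)$ on most $j$, inherent in the sofic condition, closes the loop. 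After the replacement, the sum becomes the $x_{*}(u_{d_{i}})$-average of $y\mapsto\prod_{g}f_{g,k}(g^{-1}y)\in C(X)$, which is close to the corresponding $\mu$-integral provided these finitely many functions are placed in $L$ and $\delta$ is taken sufficiently small. Assembling all of these choices yields the required $(F,L,\delta)$.
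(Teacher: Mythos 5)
Your proposal is correct and follows essentially the same route as the paper: part (i) is the same Stone--Weierstrass reduction to cylinder functions $\prod_{g}f_{g}(\xi(g))$, followed by the same replacement of $f_{g}(x(\sigma_{i}(g)^{-1}(j)))$ by $f_{g}(g^{-1}x(j))$ on most $j$ using dynamical generation of $\rho$, almost equivariance, and almost multiplicativity of $\sigma_{i}$, with the resulting function $y\mapsto\prod_{g}f_{g}(g^{-1}y)$ placed in $L$. In part (ii) you test directly against the continuous cylinder function $\xi\mapsto\rho(\xi(e),g\xi(g))^{2}$ and use an exact change of variables to recover $\rho_{2}(gx,x\circ\sigma_{i}(g))^{2}$, whereas the paper approximates $\rho$ by finitely many continuous functions $f_{l}$ and compares $\sigma_{i}(g^{-1})^{-1}$ with $\sigma_{i}(g)$ via soficity; your variant is marginally more direct but the argument is the same in substance.
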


\begin{proof}

 (i): For $g\in G,$ let $\iota_{g}\colon C(X)\to C(X^{G})$ be the map defined by $\iota_{g}(f)(x)=f(x(g)).$ Assume we are given a finite $E\subseteq G,$ a $(f_{g})_{g\in E}\in C(X)^{E},$ and an $\varepsilon>0$. It is enough to handle the case that
\[\mathcal{O}=\left\{\nu\in \Prob(X^{G}):\left|\int_{X^{G}}\prod_{g\in E}\iota_{g}(f_{g})\,d\nu-\int_{X^{G}}\prod_{g\in E}\iota_{g}(f_{g})\,d\Psi_{*}(\mu)\right|<\varepsilon\right\}.\]
This is because the collection of all $\prod_{g\in E}\iota_{g}(f_{g})$ ranging over all finite $E\subseteq G$ and $(f_{g})_{g\in E}\in C(X)^{E}$ has dense linear span, by the Stone-Weierstrass Theorem.  Define $\xi\in C(X)$ by
\[\xi(x)=\prod_{g\in E}f_{g}(g^{-1}x).\]

Let $C=\max_{g\in E}\|f_{g}\|.$ We may choose a finite $K\subseteq G$ and a $\kappa>0$ so that if $x,y\in X$ and $\max_{h\in K}\rho(hx,hy)<\kappa,$ then $\max_{g\in E}|f_{g}(x)-f_{g}(y)|<\frac{\varepsilon}{3|E|C^{|E|-1}}.$
Suppose that $F$ is a finite subset of $G$ containing
\[(K\cup K^{-1}\cup\{e\})(E\cup E^{-1}\cup \{e\}).\]
Now choose $\delta\in (0,\varepsilon/3)$ sufficiently small so that $\frac{9|K|||E|\delta^{2}}{\kappa^{2}}<\frac{\varepsilon}{6C^{|E|}}.$ Note that for $x\in \Map_{\mu}(\rho,F,\{\xi\},\delta,\sigma_{i}),$ and $h\in K,g\in E$ we have
\begin{align*}
\rho_{2}(h\cdot(x\circ \sigma_{i}(g)^{-1}),hg^{-1}x)&\leq \delta+\rho_{2}(h\cdot (x\circ \sigma_{i}(g)^{-1}),x\circ \sigma_{i}(hg^{-1}))\\
&\leq 2\delta+\rho_{2}(x\circ \sigma_{i}(h)\sigma_{i}(g^{-1}),x\circ \sigma_{i}(hg^{-1})).
\end{align*}
By soficity and the fact that $\rho$ is bounded, the estimate above shows that for all sufficiently large $i$ we have
\[\rho_{2}(h\cdot(x\circ \sigma_{i}(g)^{-1}),hg^{-1}x)<3\delta.\]
Suppose $x\in \Map_{\mu}(\rho,F,L,\delta,\sigma_{i}),$ and let
\[J=\bigcap_{h\in K,g\in E}\{1\leq j\leq d_{i}:\rho(hg^{-1}x(j),hx(\sigma_{i}(g)^{-1}(j)))<\kappa\}.\]
By Chebyshev's inequality 
\[u_{d_{i}}(J^{c})\leq \frac{9|K||E|\delta^{2}}{\kappa^{2}}<\frac{\varepsilon}{6C^{|E|}},\]
for all sufficiently large $i.$
For $x\in \Map_{\mu}(\rho,F,\{\xi\},\delta,\sigma_{i})$ we  have
\[\left|\frac{1}{d_{i}}\sum_{j=1}^{d_{i}}\prod_{g\in E}f_{g}([g^{-1}x(j)])-\int_{X}\xi(x)\,d\mu(x)\right|<\varepsilon/3,\]
and for all sufficiently large $i$
\begin{align*}
\left|\frac{1}{d_{i}}\sum_{j=1}^{d_{i}}\prod_{g\in E}f_{g}([g^{-1}x(j)])-\frac{1}{d_{i}}\sum_{j=1}^{d_{i}}\prod_{g\in E}f_{g}([x(\sigma_{i}(g)^{-1}(j))])\right|&\leq 2C^{|E|}u_{d_{i}}(J^{c})\\
&+\frac{1}{d_{i}}\sum_{j\in J}\left|\prod_{g\in E}f_{g}([g^{-1}x(j)])-\prod_{g\in E}f_{g}([x(\sigma_{i}(g)^{-1}(j))])\right|\\
&\leq \frac{\varepsilon}{3}\\
&+\frac{1}{d_{i}}\sum_{j\in J}\sum_{g\in E}C^{|E|-1}|f_{g}(g^{-1}x(j))-f_{g}(x(\sigma_{i}(g)^{-1}(j)))|\\
&\leq \frac{2\varepsilon}{3}.
\end{align*}
Hence it follows that  for all sufficiently large $i,$
\[\left|\int_{X}\xi(x)\,d\mu(x)-\frac{1}{d_{i}}\sum_{j=1}^{d_{i}}\prod_{g\in E}f_{g}(x(\sigma_{i}(g)^{-1}(j)))\right|<\varepsilon.\]
As
\[\int_{X}\xi(x)\,d\mu(x)=\int_{X^{G}}\prod_{g\in E}\iota_{g}(f_{g})\,d\Psi_{*}\mu,\]
we see that $(\phi_{x})_{*}(u_{d_{i}})\in \mathcal{O}.$

(ii): Let $M$ be the  diameter of $(X,\rho).$ We may choose $f_{1},\dots,f_{k}\in C(X)$ and $\eta>0$ so that if $x,y\in X$ and
\[\max_{1\leq l\leq k}|f_{l}(x)-f_{l}(y)|<\eta,\]
then $\rho(x,y)<\delta/2.$ Fix $\varepsilon\in (0,\delta)$ sufficiently small so that $\frac{4\varepsilon^{2}}{\eta^{2}}k^{2}<\frac{3\delta^{2}}{4M^{2}}.$  For $g\in F,$ and $1\leq l\leq k,$ define $H_{l,g}\in C(X^{G})$ by
\[H_{l,g}(x)=|f_{l}(gx(e))-f_{l}(x(g^{-1}))|.\]
For every $g\in F,$ and $1\leq l\leq k,$ we have
\[\int H_{l,g}(x)\,d\Psi_{*}\mu(x)=0.\]
Set
\[\mathcal{O}=\bigcap_{f\in L}\left\{\nu\in \Prob(X^{G}):\left|\int_{X^{G}}\iota_{e}(f)\,d\nu-\int_{X^{G}}\iota_{e}(f)\,d\Psi_{*}(\mu)\right|<\varepsilon\right\}\cap \bigcap_{\substack{1\leq l\leq k,\\ g\in F}}\left\{\nu\in \Prob(X^{G}):\int_{X^{G}}H_{l,g}\,d\nu<\varepsilon\right\}.\]
Suppose that $x\in X^{d_{i}}$ and $(\phi_{x})_{*}(u_{d_{i}})\in \mathcal{O},$ we then clearly have
\[\left|\frac{1}{d_{i}}\sum_{j=1}^{d_{i}}f(x(j))-\int f\,d\mu\right|<\delta.\]
Fix $g\in F$ and set
\[J=\bigcap_{l=1}^{k}\left\{1\leq j\leq d_{i} :H_{l,g}(\phi_{x}(j))<\eta\right\}\cap \{1\leq j\leq d_{i}:\sigma_{i}(g^{-1})^{-1}(j)=\sigma_{i}(g)(j)\}.\]
By soficity, for all large $i$ we have
\[u_{d_{i}}(J^{c})\leq k\frac{2\varepsilon}{\eta}.\]
For $g\in F$ and $j\in J$ we have
\[\max_{1\leq l\leq k}|f_{l}(gx(j))-f_{l}(x(\sigma_{i}(g^{-1})^{-1}(j)))|=\max_{1\leq l\leq k}H_{l,g}(\phi_{x}(j))<\eta,\]
so
\[\rho(gx(j),x(\sigma_{i}(g)(j)))=\rho(gx(j),x(\sigma_{i}(g^{-1})^{-1}(j)))<\frac{\delta}{2}.\]
Thus for all sufficiently large $i$
\[\rho_{2}(gx,x\circ \sigma_{i}(g))^{2}\leq \frac{4\varepsilon^{2}}{\eta^{2}}M^{2}k^{2}+\frac{\delta^{2}}{4}<\delta^{2},\]
so $x\in \Map_{\mu}(\rho,F,L,\delta,\sigma_{i}).$

\end{proof}

\begin{lem}\label{L:dqequvialkdghalk} Let $G$ be a countable discrete sofic group with sofic approximation $\sigma_{i}\colon G\to S_{d_{i}}.$ Let $X$ be a compact, metrizable space with $G\cc X$ by homeomorphisms. Suppose that $\rho$ is a dynamically generating pseudometric on $X.$ Define $\Psi\colon X\to X^{G}$ by $\Psi(x)(g)=g^{-1}x.$ Then $\mu_{i}$ locally and empirically converges to $\mu$ in the sense of definition \ref{D:quenched} if and only if $\mu_{i}$ quenched converges to $\Psi_{*}\mu$ in the sense of \cite{AustinAdd}.

\end{lem}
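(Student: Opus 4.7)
The plan is to derive both implications from Lemma \ref{L:transferlemma}, which is already engineered to swap the event $\{x:(\phi_x)_*(u_{d_i})\in\mathcal{O}\}$ with $\Map_\mu(\rho,F,L,\delta,\sigma_i)$, so the ``first clauses'' of the two convergence notions translate mechanically. The only remaining work is to match the two ``second clauses'', and this reduces to a short Stone--Weierstrass argument parallel to but considerably lighter than the one inside the proof of Lemma \ref{L:transferlemma}.

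For the forward direction, I would assume $\mu_i\to^{le}\mu$ and fix a weak$^*$-neighborhood $\mathcal{O}$ of $\Psi_*\mu$ in $\Prob(X^G)$. Lemma \ref{L:transferlemma}(i) produces finite $F\subseteq G$, $L\subseteq C(X)$, and $\delta>0$ with $\Map_\mu(\rho,F,L,\delta,\sigma_i)\subseteq \{x:(\phi_x)_*(u_{d_i})\in\mathcal{O}\}$ for all large $i$, yielding the first clause of Austin's definition from the first clause of $\to^{le}$. For the second clause, I would reduce $\mathcal{O}$ via Stone--Weierstrass to a finite intersection of sets of the form $\{\nu:|\int\iota_g(f)\,d\nu-\int\iota_g(f)\,d\Psi_*\mu|<\varepsilon\}$, where $\iota_g(f)(y)=f(y(g))$. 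Interpreting $(\mathcal{E}_j)_*\mu_i$ as its $\phi_\cdot$-pushforward in $\Prob(X^G)$, membership in such a set becomes $|\int f(x(\sigma_i(g)^{-1}(j)))\,d\mu_i(x)-\int f\,d\mu|<\varepsilon$; since $\sigma_i(g)^{-1}$ is a bijection of $\{1,\dots,d_i\}$, a change of variable $j'=\sigma_i(g)^{-1}(j)$ reduces this to the second clause of $\to^{le}$ applied to $f$ with tolerance $\varepsilon$.

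For the reverse direction, I would assume quenched convergence of $\mu_i$ to $\Psi_*\mu$ and fix finite $F\subseteq G$, $L\subseteq C(X)$, $\delta>0$. Lemma \ref{L:transferlemma}(ii) yields a weak$^*$-neighborhood $\mathcal{O}$ of $\Psi_*\mu$ with $\{x:(\phi_x)_*(u_{d_i})\in\mathcal{O}\}\subseteq\Map_\mu(\rho,F,L,\delta,\sigma_i)$, so the first clause of quenched convergence directly gives the first clause of $\to^{le}$. For the second clause of $\to^{le}$, I would apply the quenched second clause with $\mathcal{O}=\bigcap_{f\in L}\{\nu\in\Prob(X^G):|\int\iota_e(f)\,d\nu-\int f\,d\mu|<\delta\}$: under the $\phi_\cdot$-identification, membership of $(\mathcal{E}_j)_*\mu_i$ in this $\mathcal{O}$ unwinds (using $\sigma_i(e)\approx\id$ from soficity) to $|\int f(\phi(j))\,d\mu_i(\phi)-\int f\,d\mu|<\delta$ for each $f\in L$, which is exactly what is needed.

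The only delicate point is the implicit identification between $\Prob(X)$ and its image under $\Psi_*$ in $\Prob(X^G)$ required to interpret Austin's ``$(\mathcal{E}_j)_*\mu_i\in\mathcal{O}$'' clause; this is handled uniformly by regarding every $x\in X^{d_i}$ through $\phi_x$, so that all the measures in sight live in $\Prob(X^G)$ and the Stone--Weierstrass toolkit used in Lemma \ref{L:transferlemma} applies verbatim, but in a strictly simpler setting since here only one coordinate $j$ is being probed at a time.
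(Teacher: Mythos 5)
Your treatment of both first clauses via Lemma \ref{L:transferlemma}, and of the second clause in the direction ``quenched $\Rightarrow$ le'' (testing against $\iota_{e}(f)$ for $f\in L$ and using $\sigma_{i}(e)\approx\id$), matches the paper and is fine. The gap is in the second clause of the direction ``le $\Rightarrow$ quenched''. You claim that Stone--Weierstrass lets you reduce a weak$^{*}$ neighborhood of $\Psi_{*}\mu$ in $\Prob(X^{G})$ to finitely many conditions on \emph{single} test functions $\iota_{g}(f)$. That is false: the linear span of the single-coordinate functions $\iota_{g}(f)$ is not an algebra and is not dense in $C(X^{G})$ (already on $X^{\{e,g\}}$ one cannot approximate $f(x(e))h(x(g))$ by sums of one-variable functions). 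Stone--Weierstrass only reduces the neighborhood base to products $\prod_{g\in E}\iota_{g}(f_{g})$ over finite $E\subseteq G$, and for such a product the condition ``$(\mathcal{E}_{j})_{*}\mu_{i}\in\mathcal{O}$'' unwinds to control of
\[
\int_{X^{d_{i}}}\prod_{g\in E}f_{g}\bigl(x(\sigma_{i}(g)^{-1}(j))\bigr)\,d\mu_{i}(x),
\]
which involves the \emph{joint} distribution of several permuted coordinates under $\mu_{i}$. The second clause of Definition \ref{D:quenched} only controls one-dimensional marginals, so your change-of-variables trick $j'=\sigma_{i}(g)^{-1}(j)$ cannot finish the argument.

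This is exactly where the paper does its real work: it first shows, using the fact that $\mu_{i}$ is asymptotically supported on topological microstates together with soficity, that
\[
\frac{1}{d_{i}}\sum_{j=1}^{d_{i}}\int_{X^{d_{i}}}\Bigl|\prod_{g\in E}f_{g}(g^{-1}x(j))-\prod_{g\in E}f_{g}\bigl(x(\sigma_{i}(g)^{-1}(j))\bigr)\Bigr|\,d\mu_{i}(x)\to 0,
\]
by a Chebyshev estimate on the set where $\rho\bigl(hg^{-1}x(j),hx(\sigma_{i}(g)^{-1}(j))\bigr)$ is small. Only after this replacement does the product collapse to a single continuous function $F(y)=\prod_{g\in E}f_{g}(g^{-1}y)$ of the one coordinate $x(j)$, to which the second clause of local and empirical convergence applies. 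You need to supply this intermediate step; without it the forward implication is not proved.
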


\begin{proof} Let $M$ be the diameter of $(X,\rho).$ First suppose that $\mu_{i}$ quenched converges to $\Psi_{*}\mu$ in the sense of \cite{AustinAdd}.
It is not hard to argue from Lemma \ref{L:transferlemma} that
\[\mu_{i}(\Map_{\mu}(\rho,F,L,2\delta,\sigma_{i}))\to 1.\]
As
\[\int_{X^{d_{i}}}f(\phi_{x}(j)(e))\,d\mu_{i}(x)=\int_{X^{d_{i}}}f(x(j))\,d\mu_{i}(x),\]
\[\int_{X^{G}}f(x(e))\,d\Psi_{*}\mu(x)=\int_{X}f(x)\,d\mu(x),\]
it follows easily from the fact that $\mu_{i}$ quenched converges to $\Psi_{*}\mu$ in the sense of \cite{AustinAdd} that
\[\min_{f\in L}u_{d_{i}}\left(\left\{j:\left|\int_{X^{d_{i}}}f(x(j))-\int_{X}f\,d\mu\right|<\delta\right\}\right)\to 1.\]
Thus $\mu_{i}$ locally and empirically converges to $\mu$ in the sense of Definition \ref{D:quenched}.

Conversely, suppose that $\mu_{i}$ locally and empirically converges to $\mu$ in the sense of Definition \ref{D:quenched}. It is not hard to show from Lemma \ref{L:transferlemma} that for any weak$^{*}$ neighborhood $\mathcal{O}$ of $\Psi_{*}\mu$ in $\Prob(X^{G})$ we have
\[\mu_{i}(\{x:(\phi_{x})_{*}(u_{d_{i}}) \in\mathcal{O}\})\to 1.\]
For $g\in G,$ let $\iota_{g}\colon C(X)\to C(X^{G})$ be given by
\[\iota_{g}(f)(x)=f(x(g)).\]
Let
\[A=\bigcup_{\textnormal{ finite} E\subseteq G}\left\{\prod_{g\in E}\iota_{g}(f_{g}):(f_{g})_{g\in E}\in C(X)^{E}\right\},\]
and observe that $A$ has dense linear span in $C(X^{G}).$  Fix a $\delta>0,$ a finite $E\subseteq G,$ and a $(f_{g})_{g\in E}\in C(X)^{E}.$ Since $A$ has dense linear span in $C(X^{G}),$ to show that $\mu_{i}$ quenched converges to $\Psi_{*}\mu$ in the sense of \cite{AustinAdd} it is enough to show that
\[u_{d_{i}}\left(\left\{j:\left|\int_{X^{d_{i}}}\prod_{g\in E}f_{g}(x(\sigma_{i}(g)^{-1}(j)))\,d\mu_{i}(x)-\int_{X}\prod_{g\in E}f_{g}(g^{-1}x)\,d\mu(x)\right|>\delta\right\}\right)\to 0.\]
Since $\mu_{i}$ locally and empirically converges to $\mu$ in the sense of Definition \ref{D:quenched}, it is clear that
\[u_{d_{i}}\left(\left\{j:\left|\int_{X^{d_{i}}}\prod_{g\in E}f_{g}(g^{-1}(x(j)))\,d\mu_{i}(x)-\int_{X}\prod_{g\in E}f_{g}(g^{-1}x)\,d\mu(x)\right|>\frac{\delta}{2}\right\}\right)\to 0.\]
So it suffices to show
\[u_{d_{i}}\left(\left\{j:\left|\int_{X^{d_{i}}}\prod_{g\in E}f_{g}(g^{-1}(x(j)))\,d\mu_{i}(x)-\int_{X^{d_{i}}}\prod_{g\in E}f_{g}(x(\sigma_{i}(g)^{-1}(j)))\,d\mu_{i}(x)\right|>\frac{\delta}{2}\right\}\right)\to 0,\]
and to show this it is enough to establish
\begin{equation}\label{E:bigreductionalsdjafl}
\frac{1}{d_{i}}\sum_{j=1}^{d_{i}}\int_{X^{d_{i}}}\left|\prod_{g\in E}f_{g}(g^{-1}(x(j)))-\prod_{g\in E}f_{g}(x(\sigma_{i}(g)^{-1}(j)))\right|\,d\mu_{i}(x)\to 0.
\end{equation}
Let $C=\max_{g\in E}\|f_{g}\|,$ then
\[\frac{1}{d_{i}}\sum_{j=1}^{d_{i}}\int_{X^{d_{i}}}\left|\prod_{g\in E}f_{g}(g^{-1}(x(j)))-\prod_{g\in E}f_{g}(x(\sigma_{i}(g)^{-1}(j)))\right|\,d\mu_{i}(x)\leq \]
\[\frac{1}{d_{i}}\sum_{j=1}^{d_{i}}\sum_{g\in E}C^{|E|-1}\int_{X^{d_{i}}}\left|f_{g}(g^{-1}(x(j)))-f_{g}(x(\sigma_{i}(g)^{-1}(j)))\right|\,d\mu_{i}(x)\]
and so it is enough to establish that, for every $g\in E,$ we have
\begin{equation}\label{E:bigreduction1dljgaljalgj}
\frac{1}{d_{i}}\sum_{j=1}^{d_{i}}\int_{X^{d_{i}}}|f_{g}(g^{-1}(x(j)))-f_{g}(x(\sigma_{i}(g)^{-1}(j)))|\,d\mu_{i}(x)\to 0.
\end{equation}

	Fix $g\in E$ and an $\varepsilon>0.$ Because $\rho$ is dynamically generating, we may find an $\eta>0$ and a finite $F\subseteq G$ so that if $x,y\in X$ and $\max_{h\in F}\rho(hx,hy)<\eta,$ then $\max_{g\in E}|f_{g}(x)-f_{g}(y)|<\varepsilon.$ Let
	\[A_{i}=\bigcap_{h\in F}\{(x,j):\rho(hg^{-1}(x(j)),hx(\sigma_{i}(g)^{-1}(j)))<\eta\}.\]
For $h\in F,g\in E,x\in X^{d_{i}}$ we have
\[\rho_{2}(hg^{-1}x,h(x\circ \sigma_{i}(g)^{-1}))\leq \rho_{2}(hg^{-1}x,x\circ \sigma_{i}(hg^{-1}))+\rho_{2}(x\circ \sigma_{i}(h)\sigma_{i}(g)^{-1},x\circ \sigma_{i}(hg^{-1}))+\rho_{2}(hx,x\circ \sigma_{i}(h)).\]
By soficity,
\[\sup_{x\in X^{d_{i}}}\rho_{2}(x\circ \sigma_{i}(h)\sigma_{i}(g)^{-1},x\circ \sigma_{i}(hg^{-1})\to 0.\]
Hence
\[\limsup_{i\to\infty}\mu_{i}\otimes u_{d_{i}}(A_{i}^{c})\leq \frac{1}{\eta^{2}}\limsup_{i\to\infty}\int_{X^{d_{i}}}\sum_{h\in F} (\rho_{2}(hg^{-1}x,x\circ \sigma_{i}(hg^{-1}))+\rho_{2}(hx,x\circ \sigma_{i}(h)))^{2}\,d\mu_{i}(x)= 0,\]
because $\mu_{i}$ locally and empirically converges to $\mu$ in the sense of Definition \ref{D:quenched}. It follows that
\[\limsup_{i\to\infty}\frac{1}{d_{i}}\sum_{j=1}^{d_{i}}\int_{X^{d_{i}}}|f_{g}(g^{-1}(x(j)))-f_{g}(x(\sigma_{i}(g)^{-1}(j)))|\,d\mu_{i}(x)=\]
\[\limsup_{i\to\infty}\int_{A_{i}}|f_{g}(g^{-1}(x(j)))-f_{g}(x(\sigma_{i}(g)^{-1}(j)))|d(\mu_{i}\otimes u_{d_{i}})(x,j)\leq \varepsilon.\]
Since $\varepsilon$ is arbitrary, we have shown  (\ref{E:bigreduction1dljgaljalgj}).

\end{proof}

Because of the above lemma, and the fact that context will make it clear which sofic approximation we are using, if $\sigma_{i},X,\mu_{i}$ are as in the theorem we use $\mu_{i}\to^{le}\mu$ to mean that $\mu_{i}$ locally and empirically converges to $\mu$ in the sense of Definition \ref{D:quenched}. We say that $\mu_{i}$ \emph{locally and doubly empirically converges}to $\mu,$ and write $\mu_{i}\to^{lde}\mu,$ if $\mu_{i}\otimes \mu_{i}\to^{le}\mu\otimes \mu.$  By Corollary 5.18 of \cite{AustinAdd}, it follows that the existence of a sequence of measures which locally and empirically converge to $\mu$ is independent of the topological model. Note that Lemma \ref{L:dqequvialkdghalk} implies an obvious analogue of Lemma \ref{L:dqequvialkdghalk} for doubly quenched convergence. By the results of \cite{AustinAdd}, it follows that existence of a sequence of measures which locally and doubly empirically converge to $\mu$ is independent of the topological model and is preserved under measure-theoretic factor maps.

We reformulate a few of the results from \cite{AustinAdd} in our framework. Given sets $X,Y$ a natural number $n,$ and $\phi\in X^{n},\psi\in Y^{n},$ we define $\phi\otimes \psi\in (X\times Y)^{n}$ by $(\phi\otimes\psi)(j)=(\phi(j),\psi(j))$ for $1\leq j\leq n.$

\begin{cor}\label{C:dqfacts}  Let $G$ be a countable discrete sofic group with sofic approximation $\sigma_{i}\colon G\to S_{d_{i}}.$ Let $X$ be a compact, metrizable space with $G\cc X$ by homeomorphisms, and let $\rho$ be a dynamically generating pseudometric on $X.$ Fix a $\mu\in \Prob_{G}(X)$ and a sequence $\mu_{i}\in \Prob(X^{d_{i}}).$ 
\begin{enumerate}[(i)]
\item\label{I:productdq} Assume that $\mu_{i}\to^{lde}\mu.$ Let $Y$ be a compact, metrizable space, let $G\actson Y$ by homeomorphisms and let $\nu\in \Prob_{G}(X).$ Then for any dynamically generating pseudometrics $\rho',\widetilde{\rho}$ on $X\times Y,Y,$  for any finite $F'\subseteq G,L'\subseteq C(X\times Y),$ and any $\delta'>0,$ there exists  finite $F\subseteq G,L\subseteq C(Y),$ and a $\delta>0$ so that
\[\lim_{i\to\infty}\inf_{\psi\in \Map_{\nu}(\widetilde{\rho},F,L,\delta,\sigma_{i})}\mu_{i}(\{\phi:\phi\otimes \psi\in \Map_{\mu\otimes \nu}(\rho',F',L',\delta',\sigma_{i})\})=1.\]
\item\label{I:wmdq} Assume that $G\actson (X,\mu)$ is ergodic. Suppose that 
\begin{itemize}
\item for any finite $F\subseteq G,$ and any $\delta>0,$
\[\mu_{i}(\Map(\rho,F,\delta,\sigma_{i}))\to 1,\]
\item for any weak-$^{*}$ neighborhood $\mathcal{O}$ of $\mu$ in $\Prob(X)$ we have
\[u_{d_{i}}(\{j:(\mathcal{E}_{j})_{*}\mu_{i}\in\mathcal{O}\})\to 1,\]
\end{itemize}
then $\mu_{i}\to^{le}\mu.$ If in addition $G\actson (X,\mu)$ is weakly mixing, then $\mu_{i}\to^{lde}\mu.$
\end{enumerate}

\end{cor}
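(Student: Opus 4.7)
The plan is to deduce both parts from the corresponding results of \cite{AustinAdd} via Lemmas \ref{L:transferlemma} and \ref{L:dqequvialkdghalk}, which translate our formulation of local (and doubly) empirical convergence to Austin's formulation in terms of invariant measures on the shift space $X^{G}$.

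For (\ref{I:productdq}), the assumption $\mu_{i}\to^{lde}\mu$ means, via the doubly empirical analogue of Lemma \ref{L:dqequvialkdghalk} applied to $(X\times X,\mu\otimes\mu)$, that $\mu_{i}\otimes\mu_{i}$ quenched converges in Austin's sense. A central structural property of doubly quenched convergence developed in Section 5 of \cite{AustinAdd} is that it captures empirical independence uniformly even when one of the two factors is held fixed as a deterministic microstate: for any $\psi$ from a sufficiently fine microstate space for $\nu$, a $\mu_{i}$-typical $\phi$ satisfies that the joint empirical measure $(\phi\otimes\psi)_{*}(u_{d_{i}})$ is close to $\mu\otimes\nu$ in the weak-$*$ topology, uniformly in $\psi$. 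Converting this statement to our formulation on the compact space $X\times Y$ using Lemma \ref{L:transferlemma} (applied with the dynamically generating product pseudometric $\rho'((x,y),(x',y'))=\max(\rho(x,x'),\widetilde{\rho}(y,y'))$) yields exactly the stated uniform lower bound on $\mu_{i}$-measure.

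For the first assertion of (\ref{I:wmdq}), the coordinate-convergence clause of $\to^{le}$ is built into the hypotheses, so the real content is promoting concentration on $\Map(\rho,F,\delta,\sigma_{i})$ to concentration on $\Map_{\mu}(\rho,F,L,\delta,\sigma_{i})$. Fixing $f\in C(X)$ with $\int f\,d\mu=0$, the key computation is the variance estimate
\[
\int_{X^{d_{i}}}\left|\frac{1}{d_{i}}\sum_{j=1}^{d_{i}}f(\phi(j))\right|^{2}d\mu_{i}(\phi)=\frac{1}{d_{i}^{2}}\sum_{j,k=1}^{d_{i}}\int f(\phi(j))f(\phi(k))\,d\mu_{i}(\phi).
\]
Approximate equivariance lets us rewrite each summand with $k=\sigma_{i}(g)(j)$ for $g$ in some finite set as being close (after coordinate-convergence substitution) to the matrix coefficient $\langle f,\pi(g)f\rangle_{L^{2}(\mu)}$ of the Koopman representation $\pi$; ergodicity of $G\actson(X,\mu)$ then forces these to Cesàro-average to zero over the almost action $\sigma_{i}$. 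The almost-free property of the sofic approximation ensures that pairs $(j,k)$ outside the union of graphs of $\sigma_{i}(g)$ for $g$ in a finite subset constitute an overwhelming fraction of $\{1,\dots,d_{i}\}^{2}$, and their contribution is controlled by coordinate convergence together with ergodicity. Chebyshev's inequality then upgrades the variance vanishing to $\mu_{i}(\Map_{\mu}(\rho,F,L,\delta,\sigma_{i}))\to 1$.

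For the second assertion of (\ref{I:wmdq}), weak mixing of $G\actson(X,\mu)$ is equivalent to ergodicity of the diagonal action on $(X\times X,\mu\otimes\mu)$. With $\rho'$ as above on $X\times X$, the hypotheses of the first assertion transfer to $\mu_{i}\otimes\mu_{i}$ on the diagonal system: concentration on $\Map(\rho',F,\delta,\sigma_{i})$ follows from the corresponding concentration for $\mu_{i}$, and coordinate weak-$*$ convergence transfers because $(\mathcal{E}_{j})_{*}(\mu_{i}\otimes\mu_{i})=(\mathcal{E}_{j})_{*}\mu_{i}\otimes(\mathcal{E}_{j})_{*}\mu_{i}$. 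Applying the first assertion of (\ref{I:wmdq}) to this situation gives $\mu_{i}\otimes\mu_{i}\to^{le}\mu\otimes\mu$, which is by definition $\mu_{i}\to^{lde}\mu$. The main technical obstacle is the variance bookkeeping in the first assertion: separating the small set of pairs related by the almost action (where approximate equivariance and ergodicity combine via Koopman coefficients) from the complementary bulk (where almost freeness and coordinate convergence must force negligible contribution) requires careful estimation, but is essentially the sofic mean ergodic principle underlying Austin's framework in \cite{AustinAdd}.
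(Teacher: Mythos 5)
Your treatment of part (\ref{I:productdq}) is essentially the paper's: after the translation Lemmas \ref{L:transferlemma} and \ref{L:dqequvialkdghalk}, the uniform-over-$\psi$ statement is exactly Theorem A of \cite{AustinAdd}, and your product-pseudometric bookkeeping on $X\times Y$ is the right way to move back from the shift space. Likewise, your reduction of the weak-mixing assertion in (\ref{I:wmdq}) to the ergodic assertion applied to $\mu_{i}\otimes\mu_{i}$ on $X\times X$ is correct and is how this step is handled (via Lemma 5.15 of \cite{AustinAdd}).

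The gap is in your argument for the first assertion of (\ref{I:wmdq}), where the paper simply cites Corollary 5.7 of \cite{AustinAdd} but you attempt a second-moment computation. In the expansion $\frac{1}{d_{i}^{2}}\sum_{j,k}\int f(\phi(j))f(\phi(k))\,d\mu_{i}(\phi)$, the pairs $(j,k)$ lying on the graph of some $\sigma_{i}(g)$ with $g$ in a fixed finite set $F$ occupy only a fraction $|F|/d_{i}$ of all pairs, so the terms you propose to identify with Koopman matrix coefficients $\langle f,\pi(g)f\rangle$ contribute nothing in the limit; and for nonamenable $G$ there is no F{\o}lner averaging, so ``C\`esaro-average to zero by ergodicity'' has no meaning over the graphs of finitely many $\sigma_{i}(g)$. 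The entire weight of the sum sits on the complementary bulk of pairs, and there the hypotheses control only the single-coordinate marginals $(\mathcal{E}_{j})_{*}\mu_{i}$, not the joint law of $(\phi(j),\phi(k))$; indeed the bulk equals, up to an $O(1/d_{i})$ diagonal correction, $\int_{X^{d_{i}}}\bigl|\int f\,d\phi_{*}(u_{d_{i}})\bigr|^{2}\,d\mu_{i}(\phi)$, whose vanishing is precisely the concentration statement being proved, so asserting that it is ``controlled by coordinate convergence together with ergodicity'' is circular. The way ergodicity actually enters Austin's proof is through the law of the empirical measure: the topological-microstate hypothesis forces every subsequential weak$^{*}$ limit of the law of $\phi_{*}(u_{d_{i}})$ under $\mu_{i}$ to be supported on the invariant measures, the local weak$^{*}$ hypothesis forces its barycenter to be (the pushforward of) $\mu$, and ergodicity makes that measure an extreme point, so the only possible limit law is the point mass at $\mu$. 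Your sketch does not contain this (or any equivalent) mechanism, so as written the key step of part (\ref{I:wmdq}) is not established.
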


\begin{proof} Once we know Lemmas \ref{L:dqequvialkdghalk} and \ref{L:transferlemma} these all follows from some of the main results in \cite{AustinAdd}. Specifically, part (\ref{I:productdq}) follows from Theorem A of \cite{AustinAdd} and part (\ref{I:wmdq}) follows from Corollary 5.7 and Lemma 5.15 of \cite{AustinAdd}.

\end{proof}

There is a slightly subtlety in part (\ref{I:wmdq}) of Corollary \ref{C:dqfacts}. If we replace
\[\mu_{i}(\Map(\rho,F,\delta,\sigma_{i}))\to 1\]
with the assumption that $\mu_{i}$ is asymptotically  supported on \emph{measure-theoretic microstates}, i.e. that for any finite $F\subseteq G,L\subseteq C(X),$ and $\delta>0$ we have
\[\mu_{i}(\Map_{\mu}(\rho,F,L,\delta,\sigma_{i}))\to 1,\]
then part (\ref{I:wmdq}) is a tautology. The important aspect of part (\ref{I:wmdq}) is that we instead require that $\mu_{i}$ is only asymptotically supported on  \emph{topological microstates}.

For later use we also give the following reformulation of Austin's dq-entropy. Let $X$ be a compact, metrizable space, let $\rho$ be a continuous pseudometric on $X$ and $\mu\in \Prob(X).$   We let $S_{\varepsilon,\delta}(\mu,\rho)$ be the smallest cardinality of $A\subseteq X$ whose $\varepsilon$-neighborhood with respect to $\rho$ has $\mu$-measure at least $1-\delta.$ We remark that the roles of $\varepsilon,\delta$ are switched in page $7$ of \cite{AustinAdd}, we have elected to do this to stay close the notation in \cite{KLi2}.
\begin{defn}\label{D:dqreformaldjgaj}  Let $G$ be a countable discrete sofic group with sofic approximation $\sigma_{i}\colon G\to S_{d_{i}}.$ Let $X$ be a compact metrizable space, let $G\actson X$ by homeomorphisms, and  $\mu\in \Prob_{G}(X).$ Suppose that $d_{i_{n}}$ is a subsequence of $d_{i},$ and that $\mu_{i_{n}}\in \Prob(X^{d_{i_{n}}})$ satisfies $\mu_{i_{n}}\to^{lde}\mu.$ Set
\[h_{(\sigma_{i})_{i}}((\mu_{i_{n}})_{n},\rho,\varepsilon;\delta)=\limsup_{n\to\infty}\frac{1}{d_{i_{n}}}\log S_{\varepsilon,\delta}(\mu_{i_{n}},\rho_{2}).\]
We then let
\[h_{(\sigma_{i})_{i}}^{lde}(\mu,\rho,\varepsilon;\delta)=\sup_{(\mu_{i_{n}})_{n}}h_{(\sigma_{i})_{i}}((\mu_{i_{n}})_{n},\rho,\varepsilon;\delta),\]
where the supremum is over all subsequence $d_{i_{n}}$ and all measures $\mu_{i_{n}}$ with $\mu_{i_{n}}\to^{lde}\mu.$ Now set
\[h_{(\sigma_{i})_{i},\mu}^{lde}(\rho)=\sup_{\varepsilon,\delta>0}h_{(\sigma_{i})_{i}}^{lde}(\mu,\rho,\varepsilon;\delta).\]
We call $h_{(\sigma_{i})_{i},\mu}^{lde}(\rho)$ the \emph{local and doubly empirical model-measure sofic entropy}.
\end{defn}
The following is proven exactly as in Lemma 4.4 of \cite{Li}, Lemma 6.12 of \cite{BowenGroupoid} and Lemma 3.9 of \cite{Me6}.

\begin{prop}\label{P:switchmetric} Let $G$ be a countable discrete sofic group with sofic approximation $\sigma_{i}\colon G\to S_{d_{i}}.$ Let $X$ be a compact metrizable space, let $G\actson X$ by homeomorphisms, and $\mu\in \Prob_{G}(X).$ Suppose we are given a dynamically generating pseudometric $\rho$ on $X.$ Let $(a_{g})_{g\in G}$ be positive real numbers with $\sum_{g}a_{g}=1$ and define $\widetilde{\rho}$ on $X$ by
\[\widetilde{\rho}(x,y)=\sum_{g\in G}a_{g}\rho(gx,gy).\]
Then $\widetilde{\rho}$ is a compatible metric on $X$ and
\[h_{(\sigma_{i})_{i},\mu}^{lde}(\rho)=h_{(\sigma_{i})_{i},\mu}^{lde}(\widetilde{\rho}).\]
\end{prop}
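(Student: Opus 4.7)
My plan follows the standard template for such metric-change lemmas in sofic entropy theory (cf.\ the cited results in \cite{Li}, \cite{BowenGroupoid}, and \cite{Me6}). First, I would verify that $\widetilde{\rho}$ is a compatible metric. Symmetry and the triangle inequality pass termwise through the convex combination, and each $\rho(g\cdot,g\cdot)$ is continuous and uniformly bounded by the diameter $M$ of $\rho$, so the series converges uniformly and $\widetilde{\rho}$ is continuous on $X\times X$. If $\widetilde{\rho}(x,y)=0$, then $\rho(gx,gy)=0$ for every $g\in G$ (because $a_{g}>0$), hence $x=y$ by the definition of dynamically generating. The identity map from $X$ in its original topology to $(X,\widetilde{\rho})$ is then a continuous bijection from a compact space to a Hausdorff space, hence a homeomorphism.

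For the easy direction $h^{lde}_{(\sigma_{i})_{i},\mu}(\rho)\leq h^{lde}_{(\sigma_{i})_{i},\mu}(\widetilde{\rho})$, I would use the pointwise bound $\widetilde{\rho}\geq a_{e}\rho$, which at the level of $\ell^{2}$-averaged metrics gives $\widetilde{\rho}_{2}\geq a_{e}\rho_{2}$ on $X^{d_{i}}$. Hence any $\widetilde{\rho}_{2}$-$\varepsilon$-spanning set whose $\varepsilon$-neighborhood has $\mu_{i_{n}}$-measure at least $1-\delta$ is also $\rho_{2}$-$(\varepsilon/a_{e})$-spanning at the same level, so $S_{\varepsilon/a_{e},\delta}(\mu_{i_{n}},\rho_{2})\leq S_{\varepsilon,\delta}(\mu_{i_{n}},\widetilde{\rho}_{2})$. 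Taking $\log$, dividing by $d_{i_{n}}$, and applying $\limsup$ yields $h_{(\sigma_{i})_{i}}((\mu_{i_{n}})_{n},\rho,\varepsilon/a_{e};\delta)\leq h_{(\sigma_{i})_{i}}((\mu_{i_{n}})_{n},\widetilde{\rho},\varepsilon;\delta)$; passing to suprema over $\varepsilon,\delta$ and over subsequences (noting that the class of sequences with $\mu_{i_{n}}\to^{lde}\mu$ does not depend on the metric) gives the inequality.

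For the reverse inequality I would use a microstates trick. Fix $\varepsilon,\delta>0$, choose a finite $F\subseteq G$ with $\sum_{g\notin F}a_{g}<\varepsilon^{2}/(4M^{2})$, and pick $\kappa,\varepsilon'>0$ so small that $(2\kappa+2\varepsilon')^{2}+\varepsilon^{2}/4<\varepsilon^{2}$. Given a sequence $\mu_{i_{n}}\to^{lde}\mu$, local and empirical convergence yields $\mu_{i_{n}}(A_{n})\to 1$ for $A_{n}=\Map(\rho,F,\kappa,\sigma_{i_{n}})$, so for large $n$ we may assume $\mu_{i_{n}}(A_{n})\geq 1-\delta/2$. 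Starting from a $\rho_{2}$-$\varepsilon'$-spanning set $B$ realizing $S_{\varepsilon',\delta/2}(\mu_{i_{n}},\rho_{2})$, I would replace each $\psi\in B$ with a chosen $\psi'\in B_{\rho_{2}}(\psi,\varepsilon')\cap A_{n}$ whenever that intersection is nonempty; the resulting $B'\subseteq A_{n}$ satisfies $|B'|\leq |B|$ and, by the triangle inequality, $B_{\rho_{2}}(B,\varepsilon')\cap A_{n}\subseteq B_{\rho_{2}}(B',2\varepsilon')$. For $\phi\in A_{n}$ and $\psi\in B'\subseteq A_{n}$ with $\rho_{2}(\phi,\psi)<2\varepsilon'$, Jensen's inequality gives
\[\widetilde{\rho}_{2}(\phi,\psi)^{2}\leq \sum_{g\in G}a_{g}\rho_{2}(g\phi,g\psi)^{2},\]
and using that each $\sigma_{i_{n}}(g)$ is a permutation together with the microstate estimates for both $\phi$ and $\psi$ yields $\rho_{2}(g\phi,g\psi)\leq 2\kappa+\rho_{2}(\phi,\psi)\leq 2\kappa+2\varepsilon'$ for $g\in F$. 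Splitting the sum at $F$ and bounding the tail by $M^{2}\sum_{g\notin F}a_{g}<\varepsilon^{2}/4$ gives $\widetilde{\rho}_{2}(\phi,\psi)<\varepsilon$, so $B'$ is $\widetilde{\rho}_{2}$-$\varepsilon$-spanning with $\mu_{i_{n}}$-measure at least $1-\delta$. Thus $S_{\varepsilon,\delta}(\mu_{i_{n}},\widetilde{\rho}_{2})\leq S_{\varepsilon',\delta/2}(\mu_{i_{n}},\rho_{2})$, and the reverse entropy inequality follows after $\limsup$ and the appropriate suprema.

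The main obstacle is the reverse direction: one must promote a near-minimal $\rho_{2}$-spanning set to one consisting of \emph{topological microstates}, because the Jensen-plus-triangle comparison of $\widetilde{\rho}_{2}$ and $\rho_{2}$ requires sofic approximate-equivariance of both points involved. The replacement step accomplishes this, and is where the hypothesis $\mu_{i_{n}}\to^{lde}\mu$ enters, but only through its weak consequence that $\mu_{i_{n}}(\Map(\rho,F,\kappa,\sigma_{i_{n}}))\to 1$. Everything else is routine provided one chooses $F$ so that the tail $\sum_{g\notin F}a_{g}$ is small relative to $\varepsilon^{2}/M^{2}$.
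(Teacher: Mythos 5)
Your proof is correct, and it is essentially the standard argument that the paper itself omits by deferring to Lemma 4.4 of \cite{Li}, Lemma 6.12 of \cite{BowenGroupoid} and Lemma 3.9 of \cite{Me6}: one inequality from the pointwise bound $\widetilde{\rho}\geq a_{e}\rho$, the other from truncating the sum over $G$ and comparing $\widetilde{\rho}_{2}$ with $\rho_{2}$ on pairs of approximately equivariant maps. You correctly identify and handle the one point specific to the model-measure setting, namely replacing a near-minimal $\rho_{2}$-spanning set by one consisting of topological microstates using $\mu_{i_{n}}(\Map(\rho,F,\kappa,\sigma_{i_{n}}))\to 1$.
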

The following is clear from Lemma \ref{L:dqequvialkdghalk} and Proposition \ref{P:switchmetric}.
\begin{prop} Let $G$ be a countable discrete sofic group with sofic approximation $\Sigma=(\sigma_{i}\colon G\to S_{d_{i}}).$ Let $X$ be a compact metrizable space, let $G\actson X$ by homeomorphisms, and $\mu\in \Prob_{G}(X).$ The local and doubly empirical entropy $h_{(\sigma_{i})_{i},\mu}^{lde}(\rho)$ as defined in Definition \ref{D:dqreformaldjgaj} agrees with $h_{\Sigma}^{dq}(\mu)$ as defined in Section 6 of \cite{AustinAdd}. Hence $h_{(\sigma_{i})_{i},\mu}^{lde}(\rho)$ does not depend upon the dynamically generating pseudometric.
\end{prop}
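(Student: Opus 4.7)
The plan is to reduce the claim to a direct identification, using Lemma \ref{L:dqequvialkdghalk} to match the sequences over which one takes the supremum and Proposition \ref{P:switchmetric} to eliminate the choice of pseudometric. The proposition asserts two things: that $h^{lde}_{(\sigma_i)_i,\mu}(\rho)$ equals Austin's $h^{dq}_{\Sigma}(\mu)$, and (as a consequence) that $h^{lde}_{(\sigma_i)_i,\mu}(\rho)$ is independent of the dynamically generating pseudometric $\rho$. I will deduce the second from the first.

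First I would fix a dynamically generating pseudometric $\rho$ on $X$ and select positive weights $(a_g)_{g\in G}$ with $\sum_{g} a_g = 1$. Set $\widetilde{\rho}(x,y) = \sum_{g\in G} a_g \rho(gx,gy)$, which is a compatible metric on $X$ by Proposition \ref{P:switchmetric}, and $h^{lde}_{(\sigma_i)_i,\mu}(\rho) = h^{lde}_{(\sigma_i)_i,\mu}(\widetilde{\rho})$. Thus it suffices to show $h^{lde}_{(\sigma_i)_i,\mu}(\widetilde{\rho}) = h^{dq}_{\Sigma}(\mu)$ for this particular $\widetilde{\rho}$.

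Next I would choose the specific generating pseudometric used in the Austin framework on $X^{G}$, namely $d(\xi,\eta) = \sum_{g\in G} a_g \rho(\xi(g^{-1}), \eta(g^{-1}))$, so that the pullback along $\Psi\colon X \to X^{G}$ defined by $\Psi(x)(g) = g^{-1}x$ satisfies $d(\Psi(x),\Psi(y)) = \widetilde{\rho}(x,y)$. Under this identification, for every $\mu_i \in \Prob(X^{d_i})$ the quantity $S_{\varepsilon,\delta}(\mu_i, \widetilde{\rho}_2)$ appearing in Definition \ref{D:dqreformaldjgaj} coincides with the corresponding separation number that enters Austin's definition of $h^{dq}_{\Sigma}(\mu)$ in Section 6 of \cite{AustinAdd}. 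Therefore the numerical quantities being maximized agree along every admissible sequence $(\mu_{i_n})_n$.

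Finally, Lemma \ref{L:dqequvialkdghalk} (together with its obvious analogue for doubly empirical convergence, obtained by applying the lemma to $\mu_i \otimes \mu_i$ and $\mu\otimes\mu$) shows that the families of sequences over which the two suprema are taken are the same: $\mu_{i_n} \to^{lde} \mu$ in the sense of Definition \ref{D:quenched} if and only if $\mu_{i_n}$ doubly quenched converges to $\Psi_*\mu$ in the sense of \cite{AustinAdd}. Combining this with the separation-number identification yields $h^{lde}_{(\sigma_i)_i,\mu}(\widetilde{\rho}) = h^{dq}_{\Sigma}(\mu)$, and hence $h^{lde}_{(\sigma_i)_i,\mu}(\rho) = h^{dq}_{\Sigma}(\mu)$ for every dynamically generating pseudometric $\rho$, which in particular establishes metric-independence. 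The only real care needed is in handling the inversion $g \mapsto g^{-1}$ in the definition of $\Psi$, which is why $d$ above is defined using $\xi(g^{-1})$ rather than $\xi(g)$; with this bookkeeping in place there is nothing more to prove.
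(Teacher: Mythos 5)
Your argument is correct and is exactly the route the paper intends: the paper gives no written proof, stating only that the proposition "is clear from Lemma \ref{L:dqequvialkdghalk} and Proposition \ref{P:switchmetric}," and your write-up is the natural filling-in of that assertion (switch to the compatible metric $\widetilde{\rho}$, identify the separation numbers via $\Psi$, and match the admissible sequences via Lemma \ref{L:dqequvialkdghalk}). No substantive differences to report.
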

Because of the preceding proposition, we will use $h_{(\sigma_{i})_{i},\mu}^{lde}(X,G)$  instead of $h_{(\sigma_{i})_{i},\mu}^{lde}(\rho)$ whenever $\rho$ is a dynamically generating pseudometric on $X.$

We will show in Section \ref{S:equality} that the existence of a sequence which locally and doubly empirically converges to the Haar measure implies equality of topological and measure-theoretic entropy for actions by automorphisms on compact groups. In order to illustrate the benefit of these results, we close this section by giving many examples of actions on compact groups for which there is a sequence of measures which locally and doubly empirically converge to the Haar measure. 

	We need to know the following simple fact about weak mixing for actions on compact groups. This fact is well known and we only include the proof for completeness. For Hilbert spaces $\mathcal{H},\mathcal{K}$ we use $B(\mathcal{H},\mathcal{K})$ for the space of bounded, linear operators $\mathcal{H}\to \mathcal{K}.$ For a Hilbert space $\mathcal{H},$ we use $\mathcal{U}(\mathcal{H})$ for the unitary operators on $\mathcal{H}.$ Given a countable, discrete group $G$ and Hilbert spaces $\mathcal{H},\mathcal{K}$ and two representations $\pi\colon G\to \mathcal{U}(\mathcal{H}),\rho\colon G\to \mathcal{U}(\mathcal{K}),$ we set
\[\Hom(\pi,\rho)=\{T\in B(\mathcal{H},\mathcal{K}):T\pi(g)=\rho(g)T\mbox{ for all $g\in G$}\}.\]

\begin{lem}\label{L:autwq} Let $G$ be a countable, discrete group and let $X$ be a compact, metrizable group with $G\cc X$ by continuous automorphisms. If $G\cc (X,m_{X})$ is ergodic, then $G\cc (X,m_{X})$ is weakly mixing.

\end{lem}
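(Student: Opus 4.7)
The plan is to argue by contradiction, using the standard equivalence that a countable group action is weakly mixing if and only if its Koopman representation on $L^2_0$ admits no nonzero finite dimensional invariant subspace. Assuming $G \actson (X,m_X)$ is ergodic but not weakly mixing, I will produce a nonzero $G$-invariant function in $L^2_0(X, m_X)$, which contradicts ergodicity.

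Starting from a nonzero finite dimensional $G$-invariant subspace $V \subseteq L^2_0(X,m_X)$, I would apply the Peter--Weyl theorem to decompose
\[
L^2(X, m_X) \;=\; \C \mathbf{1} \;\oplus\; \bigoplus_{\pi \in \widehat X \setminus \{1\}} \cH_\pi,
\]
where $\cH_\pi$ is the $\pi$-isotypic component for the left regular representation of $X$. Crucially, each $\cH_\pi$ is finite dimensional (of dimension $(\dim \pi)^2$) because $X$ is compact. Writing $\alpha_g$ for the automorphism of $X$ given by $g \in G$ and $U_g$ for the Koopman operator $f \mapsto f \circ \alpha_g^{-1}$, one checks $U_g L_x U_g^{-1} = L_{\alpha_g(x)}$, hence $G$ permutes the isotypic pieces: $U_g \cH_\pi = \cH_{g \cdot \pi}$ for the induced $G$-action on $\widehat X$. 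Equivalently, if $P_\pi$ denotes projection onto $\cH_\pi$, then $U_g P_\pi U_g^{-1} = P_{g \cdot \pi}$. In addition, $P_V$ commutes with each $U_g$ because $V$ is $G$-invariant.

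The core of the argument is the following trace trick. I define $f_V \colon \widehat X \setminus \{1\} \to [0,\infty)$ by
\[
f_V(\pi) \;:=\; \operatorname{tr}(P_V P_\pi P_V).
\]
Summing in $\pi$ and using that $P_V$ annihilates $\mathbf{1}$ (since $V \subseteq L^2_0$) gives $\sum_\pi f_V(\pi) = \operatorname{tr}(P_V) = \dim V < \infty$. Combining the conjugation identities $U_g P_\pi U_g^{-1} = P_{g \pi}$ and $U_g P_V U_g^{-1} = P_V$ with cyclicity of trace yields $f_V(g\pi) = f_V(\pi)$, so $f_V$ is $G$-invariant on $\widehat X \setminus \{1\}$. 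Since $V \neq 0$, some $\pi_0 \in \widehat X \setminus \{1\}$ satisfies $f_V(\pi_0) > 0$, and summability of $f_V$ forces the orbit $G \cdot \pi_0$ to be finite. The contradiction is then immediate: the character sum $\Phi := \sum_{\pi \in G \cdot \pi_0} \chi_\pi$ is a nonzero $G$-invariant element of $L^2_0(X, m_X)$ (nonzero because distinct irreducible characters are orthonormal, orthogonal to $\mathbf{1}$ by Schur orthogonality, and $G$-invariant because $U_g \chi_\pi = \chi_{g \pi}$ and $G$ permutes $G \cdot \pi_0$), contradicting ergodicity.

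The main obstacle is establishing the two bookkeeping properties of $f_V$: its $G$-invariance and the finiteness of its sum. Both follow cleanly once the conjugation relations $U_g P_\pi U_g^{-1} = P_{g \pi}$ and $[U_g, P_V] = 0$ are in place. Everything else is classical input, namely: the compactness of $X$ forcing finite dimensionality of each $\cH_\pi$, Schur orthogonality of characters, and the standard equivalence between weak mixing and the absence of finite dimensional subrepresentations of $L^2_0$.
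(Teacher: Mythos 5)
Your proof is correct, but it takes a genuinely different route from the paper. The paper argues on the doubled system: assuming $G\actson (X\times X, m_{X}\otimes m_{X})$ is not ergodic, it invokes Lemma 1.2 of Schmidt's book to produce a nontrivial irreducible representation $\pi$ of $X\times X$ whose stabilizer in $G$ has finite index, factors $\pi\cong\pi_{1}\otimes\pi_{2}$, and then runs a Schur's lemma computation on partial matrix coefficients of the intertwining unitary to show that the stabilizer of $\pi$ is contained in the stabilizer of $\pi_{1}$; a second appeal to Schmidt's lemma then contradicts ergodicity of $G\actson(X,m_{X})$. You instead never touch $X\times X$: you use the standard spectral characterization of weak mixing (no nonzero finite-dimensional invariant subspace of $L^{2}_{0}$), decompose $L^{2}(X,m_{X})$ by Peter--Weyl into finite-dimensional isotypic pieces permuted by the Koopman operators, and use the summable $G$-invariant function $\pi\mapsto\operatorname{tr}(P_{V}P_{\pi}P_{V})$ to extract a finite $G$-orbit in $\widehat{X}\setminus\{1\}$, from which the character sum gives a nonconstant invariant function. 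In effect you reprove the relevant direction of Schmidt's Lemma 1.2 rather than citing it, and you trade the paper's tensor-factorization and Schur's lemma step for the (equally standard) representation-theoretic criterion for weak mixing. Your version is more self-contained and avoids the slightly delicate $U_{\zeta,\zeta'}$ rescaling argument; the paper's is shorter if one accepts Schmidt's lemma as a black box. All the steps you sketch (the conjugation identities $U_{g}P_{\pi}U_{g}^{-1}=P_{g\pi}$, the finiteness and invariance of the trace function, Schur orthogonality of characters) check out.
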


\begin{proof} For $g\in G,$ we use $\alpha_{g}$ for the action of $g$ on $X.$  If the action is not weakly mixing, then $G\actson (X\times X,m_{X}\otimes m_{X})$ is not ergodic, and so by  Lemma 1.2 of \cite{Schmidt} we can find a finite dimensional Hilbert space $\mathcal{H},$ and a nontrivial irreducible representation $\pi\colon X\times X\to \mathcal{U}(\mathcal{H}),$\
so that $G_{\pi}=\{g\in G:\pi\circ \alpha_{g}\cong \pi\}$ has finite index in $G.$ By  Proposition 2.3.23 of \cite{Kowalski}, we may write $\mathcal{H}=\mathcal{H}_{1}\otimes \mathcal{H}_{2}$ for Hilbert space $\mathcal{H}_{j},j=1,2$ and we may  find irreducible representations $\pi_{j}\colon X\to \mathcal{U}(\mathcal{H}_{j}),j=1,2$ so that
\[\pi(x,y)=\pi_{1}(x)\otimes\pi_{2}(y).\]
Since $\pi$ is not trivial, we have that one of $\pi_{1},\pi_{2}$ is not trivial. Without loss of generality, we assume that $\pi_{1}$ is not trivial. Suppose $g\in G_{\pi}$ and let $U\in \mathcal{U}(\mathcal{H}_{1}\otimes \mathcal{H}_{2})$ be such that
\[\pi(x,y)=U^{*}\pi(\alpha_{g}(x),\alpha_{g}(y))U\mbox{ for all $(x,y)\in X\times X.$}\]
For $\zeta,\zeta'\in \mathcal{H}_{2},$  define $U_{\zeta,\zeta'}\in B(\mathcal{H}_{1})$ by
\[\ip{U_{\zeta,\zeta'}(\xi),\xi'}=\ip{U(\xi\otimes \zeta),\xi'\otimes \zeta'}.\]
Since $U$ is a unitary, we may fix a choice of $\zeta,\zeta'$ so that $U_{\zeta,\zeta'}\ne 0,$ note that $U_{\zeta,\zeta'}\in \Hom(\pi_{1},\pi_{1}\circ \alpha_{g}).$ By Schur's Lemma we have
\[U_{\zeta,\zeta'}^{*}U_{\zeta,\zeta'}\in \Hom(\pi_{1},\pi_{1})=\C \id,\]
so there is a $c\in \C$ with
\[U_{\zeta,\zeta'}^{*}U_{\zeta,\zeta'}=c \id.\]
Since $U_{\zeta,\zeta'}\ne 0,$ and $U_{\zeta,\zeta'}^{*}U_{\zeta,\zeta'}$ is a positive operator, $c$ must be a positive real number. So rescaling $U_{\zeta,\zeta'}$ by $\frac{1}{\sqrt{c}}$ gives us an isometric, equivariant isomorphism $\pi_{1}\cong \pi_{1}\circ \alpha_{g}$ and this proves that $G_{\pi}\subseteq \{g\in G:\pi_{1}\circ\alpha_{g}\cong \pi_{1}\}.$  By  Lemma 1.2 of \cite{Schmidt}, it follows that $G\cc (X,m_{X})$ is not ergodic, a contradiction.

\end{proof}

\begin{cor}\label{C:corwmdq} Suppose that $G$ is a countable, discrete, sofic group with a sofic approximation $\sigma_{i}\colon G\to S_{d_{i}}.$ Let $X$ be a compact, metrizable group with $G\cc X$ by automorphisms. If $G\cc (X,m_{X})$ is ergodic and $\mu_{i}\in\Prob(X^{d_{i}})$ with $\mu_{i}\to^{le}\mu,$ then $\mu_{i}\to^{lde}\mu.$

\end{cor}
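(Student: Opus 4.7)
The proof is a short assembly of the two results just established. The plan is to feed the hypothesis $\mu_i \to^{le} m_X$ together with Lemma \ref{L:autwq} into Corollary \ref{C:dqfacts}(\ref{I:wmdq}).

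First, I would invoke Lemma \ref{L:autwq}: since $G \actson X$ is by continuous automorphisms of a compact metrizable group and $G \actson (X, m_X)$ is assumed ergodic, the action is in fact weakly mixing with respect to $m_X$. This is exactly the hypothesis needed to reach the stronger conclusion $\mu_i \to^{lde} m_X$ in Corollary \ref{C:dqfacts}(\ref{I:wmdq}), rather than merely $\mu_i \to^{le} m_X$.

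Second, I would verify that the assumption $\mu_i \to^{le} m_X$ supplies both bullet-point hypotheses of Corollary \ref{C:dqfacts}(\ref{I:wmdq}). Fix a dynamically generating pseudometric $\rho$ on $X$. By Definition \ref{D:quenched}, for every finite $F \subseteq G$, finite $L \subseteq C(X)$, and $\delta > 0$ we have $\mu_i(\Map_{m_X}(\rho, F, L, \delta, \sigma_i)) \to 1$. Since $\Map_{m_X}(\rho, F, L, \delta, \sigma_i) \subseteq \Map(\rho, F, \delta, \sigma_i)$, this yields the first bullet point $\mu_i(\Map(\rho, F, \delta, \sigma_i)) \to 1$. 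For the second bullet point, fix a weak-$^*$ neighborhood $\mathcal{O}$ of $m_X$ in $\Prob(X)$; it contains a basic neighborhood $U_{L, \delta}(m_X)$ for suitable finite $L \subseteq C(X)$ and $\delta > 0$, and the second clause in the definition of local and empirical convergence gives
\[
u_{d_i}\!\left(\bigcap_{f\in L}\left\{j:\left|\int f\,dm_X - \int_{X^{d_i}} f(\phi(j))\,d\mu_i(\phi)\right|<\delta\right\}\right)\to 1,
\]
which is precisely the statement that $u_{d_i}(\{j : (\mathcal{E}_j)_* \mu_i \in \mathcal{O}\}) \to 1$.

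Finally, having verified both hypotheses and weak mixing, Corollary \ref{C:dqfacts}(\ref{I:wmdq}) applies and delivers $\mu_i \to^{lde} m_X$. There is no real obstacle here; the only subtle point is ensuring that the second bullet in Corollary \ref{C:dqfacts}(\ref{I:wmdq}) follows from the pointwise-$j$ formulation of local empirical convergence, which is immediate once one unwinds the basis of weak-$^*$ neighborhoods.
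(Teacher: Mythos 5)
Your proof is correct and takes exactly the route the paper does: the paper's own proof is the one-line remark that the corollary is ``a simple combination of Lemma \ref{L:autwq} and Corollary \ref{C:dqfacts},'' and you have merely spelled out the routine verification that local and empirical convergence supplies both bullet-point hypotheses of Corollary \ref{C:dqfacts}(\ref{I:wmdq}). No further comment is needed.
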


\begin{proof} This is a simple combination of  Lemma \ref{L:autwq} and Corollary \ref{C:dqfacts}.

\end{proof}

We can now list many examples of actions on compact groups which have a sequence of measures which locally and doubly empirically converge to the Haar measure.  Recall that if $X$ is a compact, metrizable group and $G$ is a countable, discrete group with $G\actson X$ by automorphisms, then the \emph{homoclinic group} of $G\actson X$ is defined by
\[\Delta(X,G)=\{x\in X:\lim_{g\to\infty}gx=1\}.\]
\begin{prop}\label{P:nonndqentropyalgexamples} Let $G$ be a countable, discrete, sofic group with sofic approximation $\sigma_{i}\colon G\to S_{d_{i}}.$  For each of the probability measure-preserving actions $G\actson (X,\mu),$ there is a sequence $\mu_{i}\in \Prob(X^{d_{i}})$ with $\mu_{i}\to^{lde}\mu:$
\begin{enumerate}[(i)]
\item  $G\actson (X,\mu)$ any Bernoulli shift action, \label{E:Bernoulli}
\item $G\actson (X,m_{X})$ where $X$ is a profinite group, the action is by continuous automorphisms, and $G\actson X$ has  a dense homoclinic group, \label{E:Profinite}
\item $G\acston (X_{f},m_{X_{f}})$ where $f\in M_{m,n}(\Z(G))$ is such that $\lambda(f)(\ell^{2}(G)^{\oplus n})$ is dense, \label{E:Me}
\item $G\actson (X_{f},m_{X_{f}})$ where $f\in M_{n}(\Z(G))$ is such that $\lambda(f)$ is injective.\label{E:Me2}
\end{enumerate}

\end{prop}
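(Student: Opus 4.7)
The plan is to reduce all four cases to a single two-step argument: first produce $\mu_i \in \Prob(X^{d_i})$ that is asymptotically supported on topological microstates and has the correct one-dimensional marginals (in the local sense), then upgrade local empirical convergence to local and doubly empirical convergence using either Corollary \ref{C:corwmdq} (for the automorphism actions (ii)--(iv)) or the weakly mixing clause of Corollary \ref{C:dqfacts}(\ref{I:wmdq}) (for case (i)). Thus for each case the work splits into (a) exhibiting a suitable $\mu_i$, where the construction is essentially contained in the cited reference, and (b) verifying ergodicity of $G \actson (X,\mu)$ so that Lemma \ref{L:autwq} and hence Corollary \ref{C:corwmdq} apply.

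For case (i), given a Bernoulli shift $G \actson (A^G, \nu^{\otimes G})$, define $\Phi_i \colon A^{d_i} \to (A^G)^{d_i}$ by $\Phi_i(\xi)(j)(g) = \xi(\sigma_i(g)^{-1}(j))$ and set $\mu_i = (\Phi_i)_*(\nu^{\otimes d_i})$. A direct computation on cylinder functions, combined with the near-injectivity of the orbit maps $g \mapsto \sigma_i(g)^{-1}(j)$ on finite windows (a consequence of the almost-free property of the sofic approximation), shows that the one-dimensional marginals locally approach $\nu^{\otimes G}$ and that $\mu_i$ is asymptotically concentrated on $\Map(\rho,F,\delta,\sigma_i)$ for any choice of dynamically generating $\rho$. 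Since Bernoulli shifts are mixing (and in particular weakly mixing), Corollary \ref{C:dqfacts}(\ref{I:wmdq}) delivers $\mu_i \to^{lde} \nu^{\otimes G}$.

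For cases (ii)--(iv), the action is by automorphisms on a compact group, so by Corollary \ref{C:corwmdq} it is enough to show $\mu_i \to^{le} m_X$ and that $G \actson (X,m_X)$ is ergodic. The required $\mu_i$ is the Haar measure on a natural closed subgroup $H_i \leq X^{d_i}$ constructed in the cited work: in case (ii), lift finite-quotient microstates from $X/K_n$ (available thanks to density of $\Delta(X,G)$ in each finite quotient) and convolve with Haar on $(K_n)^{d_i}$, as in \cite{GabSew}; in cases (iii) and (iv), take $H_i$ to be the Pontryagin dual of $(\Z^n)^{d_i} / \sigma_i(f)^t(\Z^m)^{d_i}$, embedded into $X_f^{d_i}$, as in the constructions of \cite{Me5}. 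In each reference it is shown that $\mu_i$ is asymptotically supported on $\Map(\rho,F,\delta,\sigma_i)$ and has marginals locally converging to $m_X$. Ergodicity follows by classical criteria: in (ii) from density of $\Delta(X,G)$ via a Fourier argument on $\widehat{X}$, and in (iii)--(iv) from the spectral condition on $\lambda(f)$ applied to $G$-equivariant characters of $\Z(G)^{\oplus n}/r(f)\Z(G)^{\oplus m}$.

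The principal obstacle is not the abstract reduction, which is essentially automatic once Corollaries \ref{C:dqfacts} and \ref{C:corwmdq} are in hand, but rather the careful bookkeeping required to check that the measures constructed in \cite{GabSew} and \cite{Me5} genuinely satisfy the two conditions of Corollary \ref{C:dqfacts}(\ref{I:wmdq}): namely asymptotic support on \emph{topological} microstates together with \emph{local} (rather than average) convergence of the one-dimensional marginals. The latter is the subtlety already noted after Corollary \ref{C:dqfacts}; verifying it requires translating the explicit Haar-measure constructions into statements about $u_{d_i}(\{j : (\mathcal{E}_j)_* \mu_i \in \mathcal{O}\})$, using homogeneity of Haar measure under the $G$-action on $H_i$.
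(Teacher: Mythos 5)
Your proposal follows essentially the same route as the paper: obtain the microstate measures from the constructions in \cite{Bow}/\cite{AustinAdd}, \cite{GabSew} and \cite{Me5}, check ergodicity of the action, and upgrade local empirical (or local weak$^{*}$ plus topological-microstate support) convergence to local and doubly empirical convergence via Corollary \ref{C:corwmdq}, equivalently the weak-mixing clause of Corollary \ref{C:dqfacts}. The only divergence is case (iv), which the paper dispatches in one line by reducing to case (iii) --- for square $f$, injectivity of $\lambda(f)$ is equivalent to dense image by the rank-nullity theorem for von Neumann dimension (\cite{Luck}, Lemma 1.13) --- whereas you re-run the \cite{Me5} construction directly under the injectivity hypothesis; that can be made to work, but the paper's reduction avoids having to re-verify the construction in that setting.
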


\begin{proof}
(\ref{E:Bernoulli}): This is implicitly due to Bowen in \cite{Bow} (see the proof of Theorem 8.1 therein) and was shown explicitly by Austin (see Lemma 5.11 of \cite{AustinAdd}).

(\ref{E:Profinite}):  It is known that density of the homoclinic group implies that $G\actson (X,m_{X})$ is ergodic (see e.g. Theorem 4.1 of \cite{BowenLi}). So by Corollary \ref{C:corwmdq}, it is enough to show that there exists a sequence $\mu_{i}\in \Prob(X^{d_{i}})$ with $\mu_{i}\to^{lw^{*}}m_{X},$ which is implicit in the proof of Theorem 8.2 of \cite{GabSew}.

(\ref{E:Me}):  By the arguments of \cite{LiSchmidtPet}, we know that $G\actson (X_{f},m_{X_{f}})$ is ergodic. It is implicit in the proof of Theorem 5.9 of \cite{Me5}  that there is a sequence of measures $\mu_{i}\in \Prob(X_{f}^{d_{i}})$ with $\mu_{i}\to^{le}m_{X_{f}},$ so applying Corollary \ref{C:corwmdq} completes the proof.

(\ref{E:Me2}) From the rank-nullity theorem for von Neumann dimension (see \cite{Luck} Lemma 1.13) we know that $\lambda(f)$ is injective if and only if $\lambda(f)$ has dense image for $f\in M_{n}(\Z(G)).$

\end{proof}

\section{Equality of Topological and Measure-Theoretic Entropy}\label{S:equality}

To prove the equality of topological and measure-theoretic entropy we will need a convolution argument, for which we establish the following preliminary (and well-known) fact.

\begin{lem}\label{L:convolutionconvergence}
Let $X$ be a compact, metrizable group. Given a neighborhood $\mathcal{O}$ in $\Prob(X)$ of $m_{X}$ in the weak$^{*}$-topology, there is a neighborhood $\mathcal{V}$ of $\Prob(X)$ of $m_{X}$ in the weak$^{*}$ topology so that $\mu*\nu\in\mathcal{O}$ for all $\mu\in \mathcal{V},\nu\in \Prob(X).$

\end{lem}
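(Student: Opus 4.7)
The plan is to exploit the invariance $m_X * \nu = m_X$ for any $\nu \in \Prob(X)$ to reduce the estimation of $\int f \, d(\mu * \nu)$ to estimating integrals against $\mu$ alone, and then to use a compactness argument in $C(X)$.

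More concretely, since a basic weak$^*$ neighborhood of $m_X$ has the form $\bigcap_{i=1}^k \{\eta : |\int f_i \, d\eta - \int f_i \, dm_X| < \varepsilon\}$ for finitely many $f_i \in C(X)$, it suffices to treat a single $f \in C(X)$ and a single $\varepsilon > 0$. For $y \in X$, define $F_y \in C(X)$ by $F_y(x) = f(xy)$. By Fubini and right-invariance of $m_X$,
\[
\int f \, d(\mu * \nu) - \int f \, dm_X = \int_X \left(\int F_y \, d\mu - \int F_y \, dm_X\right) d\nu(y),
\]
so $|\int f \, d(\mu * \nu) - \int f \, dm_X| \leq \sup_{y \in X} |\int F_y \, d\mu - \int F_y \, dm_X|$, regardless of what $\nu$ is. The problem therefore reduces to controlling $|\int F_y \, d\mu - \int F_y \, dm_X|$ uniformly in $y$.

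The crucial observation is that the map $y \mapsto F_y$ from $X$ to $(C(X), \|\cdot\|_\infty)$ is continuous: since $f$ is continuous on the compact group $X$, it is uniformly continuous with respect to any compatible left-invariant metric, and $\|F_y - F_{y'}\|_\infty = \sup_x |f(xy) - f(xy')|$ tends to $0$ as $y' \to y$. Consequently $\{F_y : y \in X\}$ is a compact subset of $C(X)$, and can be covered by finitely many open balls of radius $\varepsilon/3$ centered at $F_{y_1}, \ldots, F_{y_n}$.

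Taking
\[
\mathcal{V} = \bigcap_{j=1}^{n}\left\{\mu \in \Prob(X) : \left|\int F_{y_j} \, d\mu - \int F_{y_j} \, dm_X\right| < \varepsilon/3\right\},
\]
a standard $3\varepsilon/3$ estimate shows $|\int F_y \, d\mu - \int F_y \, dm_X| < \varepsilon$ for every $y \in X$ whenever $\mu \in \mathcal{V}$, giving $|\int f \, d(\mu * \nu) - \int f \, dm_X| < \varepsilon$ for every $\nu \in \Prob(X)$. The main (and only mild) obstacle is verifying continuity of $y \mapsto F_y$; once that is in hand, the proof is an application of compactness in $C(X)$ together with the trivial identity $m_X * \nu = m_X$.
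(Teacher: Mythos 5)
Your proof is correct. It reaches the conclusion by a genuinely different (more explicit) route than the paper. The paper's own argument is abstract: it observes that $(\mu,\nu)\mapsto\mu*\nu=p_{*}(\mu\otimes\nu)$ is jointly weak$^{*}$ continuous on the compact space $\Prob(X)\times\Prob(X)$ and that convolving $m_{X}$ with anything returns $m_{X}$, so compactness of $\Prob(X)$ (a tube-lemma argument along the slice through $m_{X}$) produces the required neighborhood $\mathcal{V}$ uniformly in $\nu$. You instead transfer the compactness from $\Prob(X)$ down to $X$ itself: the identity $\int F_{y}\,dm_{X}=\int f\,dm_{X}$ (right-invariance of Haar measure) collapses the dependence on $\nu$ into a supremum over $y\in X$, and uniform continuity of $f$ makes $y\mapsto F_{y}$ norm-continuous, so $\{F_{y}:y\in X\}$ is covered by finitely many $\varepsilon/3$-balls and finitely many test functions determine $\mathcal{V}$ explicitly. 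Your route costs a somewhat longer write-up but yields a concrete $\mathcal{V}$ and makes the uniformity over $\nu$ completely transparent; the paper's route buys brevity but leaves the compactness step that converts pointwise continuity into uniformity implicit. Both arguments ultimately rest on the same two ingredients, invariance of $m_{X}$ under convolution and a compactness argument, deployed in different spaces.
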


\begin{proof}
Since $\Prob(X)$ is compact, and $\mu*m_{X}=m_{X}$ for any $\mu\in \Prob(X),$ to prove the Lemma it is enough to show that convolution is continuous in  both variables in the weak$^{*}$-topology. Let $p\colon X\times X\to X$ be the multiplication map: $p(x,y)=xy.$ Then
\[\mu*\nu=p_{*}(\mu\otimes \nu).\]
As the maps
\[\Prob(X)\times \Prob(X)\to \Prob(X\times X),\,\,(\mu,\nu)\mapsto \mu\otimes\nu,\]
\[\Prob(X\times X)\to \Prob(X),\,\, \zeta\mapsto p_{*}\zeta\]
are continuous in the weak$^{*}$-topology, it follows that the map $(\mu,\nu)\mapsto \mu*\nu$ is continuous in the weak$^{*}$-topology.

\end{proof}

The following Lemma contains the main convolution argument we will need to establish Theorem \ref{T:mainintro}. If $X$ is a compact group and $\phi,\psi\in X^{n}$ for some $n\in \N,$ we define $\phi\psi\in X^{n}$ by $\phi\psi(j)=\phi(j)\psi(j).$

\begin{lem}\label{L:dqconv2} Let $G$ be a countable, discrete, sofic group with sofic approximation $\sigma_{i}\colon G\to S_{d_{i}}.$ Let $X$ be a compact, metrizable, group with $G\cc X$ by continuous automorphisms. Let $\mu_{i}\in \Prob(X^{d_{i}})$ with $\mu_{i}\to^{lde}m_{X}.$ Let $\rho$ be a dynamically generating pseudometric on $X.$
Then, for any finite $F\subseteq G,L\subseteq C(X),$ and $\delta,\varepsilon>0,$ there is a finite $F'\subseteq G,$ a $\delta'>0,$ and an $I\in \N$ so that for any $i>I,$ and any $\psi\in \Map(\rho,F',\delta',\sigma_{i})$ we have
\[\mu_{i}(\{\phi:\psi\phi\in \Map_{m_{X}}(\rho,F,L,\delta,\sigma_{i})\})\geq 1-\varepsilon.\]

\end{lem}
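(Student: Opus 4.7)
The plan is to decompose the condition $\psi\phi\in\Map_{m_{X}}(\rho,F,L,\delta,\sigma_{i})$ into two independent events: approximate equivariance $\rho_{2}(g(\psi\phi),(\psi\phi)\circ\sigma_{i}(g))<\delta$ for $g\in F$, and the empirical-pushforward condition $(\psi\phi)_{*}(u_{d_{i}})\in U_{L,\delta}(m_{X})$. I would handle these two events separately: the equivariance part via a triangle-inequality/convolution estimate that exploits bi-invariance of a suitable metric, and the pushforward part via first- and second-moment computations that use both halves of $\mu_{i}\to^{lde}m_{X}$.

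For the equivariance part, the plan is to pass to a bi-invariant compatible metric $d$ on $X$, which exists since $X$ is a compact metrizable group (Birkhoff--Kakutani). Because $G$ acts by automorphisms we have $g(\psi\phi)=(g\psi)(g\phi)$, and bi-invariance of $d$ yields the clean bound
\[
d_{2}(g(\psi\phi),(\psi\phi)\circ\sigma_{i}(g))\leq d_{2}(g\psi,\psi\circ\sigma_{i}(g))+d_{2}(g\phi,\phi\circ\sigma_{i}(g)).
\]
The second summand is small with high $\mu_{i}$-probability, directly from the local empirical portion of $\mu_{i}\to^{lde}m_{X}$. For the first summand, I would use the auxiliary compatible metric $\tilde\rho(x,y):=\sum_{g'}a_{g'}\rho(g'x,g'y)$ from Proposition \ref{P:switchmetric}, which is uniformly equivalent to $d$ on the compact space $X$. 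The inequalities $\tilde\rho_{2}(\cdot,\cdot)^{2}\leq\sum_{g'}a_{g'}\rho_{2}(g'\cdot,g'\cdot)^{2}$ and $\rho\leq\tilde\rho/a_{e}$ let me transfer $\rho$-microstate control of $\psi$ on a sufficiently large finite test set $F'\supseteq F_{1}F\cup F_{1}$ (absorbing sofic cocycle defects on the way) into $d$-microstate control, and then translate the resulting $d_{2}<\delta_{d}$ bound back to $\rho_{2}<\delta$ via uniform continuity of $\rho$ on the compact $X\times X$.

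For the empirical-pushforward part, fix $f\in L$ and set $\Phi_{f}(\phi):=\tfrac{1}{d_{i}}\sum_{j}f(\psi(j)\phi(j))$. Writing $T_{a}f(x)=f(ax)$, translation invariance of Haar measure gives $\int T_{a}f\,dm_{X}=\int f\,dm_{X}$ for every $a\in X$, and hence
\[
\E_{\phi\sim\mu_{i}}[\Phi_{f}]-\int f\,dm_{X}=\frac{1}{d_{i}}\sum_{j}\int T_{\psi(j)}f\,d\bigl((\mathcal{E}_{j})_{*}\mu_{i}-m_{X}\bigr).
\]
The family $\{T_{a}f:a\in X\}$ is uniformly equicontinuous and thus totally bounded in $C(X)$, so for any $\eta>0$ it admits a finite $\eta$-net $\{g_{1},\ldots,g_{m}\}$. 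Applying the local weak$^{*}$ convergence built into $\mu_{i}\to^{lde}m_{X}$ to $L\cup\{g_{1},\ldots,g_{m}\}$ bounds $|\int T_{\psi(j)}f\,d((\mathcal{E}_{j})_{*}\mu_{i}-m_{X})|$ by $3\eta$ for all $j$ outside a set of small $u_{d_{i}}$-measure, and this bound is uniform in $\psi$. An analogous estimate for $\E[\Phi_{f}^{2}]$ uses doubly empirical convergence on the pair marginals together with an $\eta$-net for the compact family $\{T_{(a,b)}(f\otimes f):(a,b)\in X\times X\}\subset C(X\times X)$ to obtain $\E[\Phi_{f}^{2}]\approx(\int f\,dm_{X})^{2}$. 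Hence both the bias and variance of $\Phi_{f}$ are $o(1)$ as $i\to\infty$, and Chebyshev's inequality together with a union bound over the finite set $L$ gives the desired $\mu_{i}$-probability $\geq 1-\varepsilon$.

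The main obstacle is achieving uniformity over $\psi$ in the empirical-pushforward estimate, since the effective test functions $T_{\psi(j)}f$ depend on $\psi$; this is resolved by the $\eta$-net argument in the compact equicontinuous family $\{T_{a}f:a\in X\}$, which reduces the $\psi$-dependent integrals back to a fixed finite family of test functions controlled by local weak$^{*}$ convergence. A secondary technical issue is converting between the given dynamically generating pseudometric $\rho$ and the bi-invariant metric $d$ in the equivariance step, handled cleanly by the auxiliary metric $\tilde\rho$ above.
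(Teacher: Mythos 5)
Your decomposition into an equivariance condition and a pushforward condition is reasonable, the bi-invariance trick for the equivariance half is sound, and your first-moment computation (using left-invariance of $m_{X}$ and an $\eta$-net in the equicontinuous family $\{T_{a}f:a\in X\}$ to get uniformity in $\psi$) is correct. But there is a genuine gap in the second-moment step, and it sits exactly where the ``doubly'' hypothesis has to do real work. Local and doubly empirical convergence means $\mu_{i}\otimes\mu_{i}\to^{le}m_{X}\otimes m_{X}$, and this controls two things only: the one-coordinate marginals $(\mathcal{E}_{j})_{*}(\mu_{i}\otimes\mu_{i})=((\mathcal{E}_{j})_{*}\mu_{i})^{\otimes 2}$, i.e.\ the law of $(x(j),y(j))$ for two \emph{independent} samples, and the empirical distribution $\frac{1}{d_{i}}\sum_{j}\delta_{(x(j),y(j))}$ of such a pair. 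Neither of these is the ``pair marginal'' $(x(j),x(k))_{j\neq k}$ of a \emph{single} sample that your estimate of $\E[\Phi_{f}^{2}]=\frac{1}{d_{i}^{2}}\sum_{j,k}\int T_{\psi(j)}f\otimes T_{\psi(k)}f\,d\bigl((\mathcal{E}_{j}\times\mathcal{E}_{k})_{*}\mu_{i}\bigr)$ requires. From the empirical condition one can control the \emph{average} over $(j,k)$ of these pair marginals against a fixed test function, but because your test function varies with $(j,k)$ through $\psi$, the $\eta$-net argument needs ``for most pairs $(j,k)$ the law of $(x(j),x(k))$ is close to $m_{X}\otimes m_{X}$,'' and average control does not yield that (the signed errors can cancel in the average while being large on the subset of pairs selected by a given net element). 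Equivalently, writing $\mathrm{Var}[\Phi_{f}]=\tfrac{1}{2}\E_{(x,y)\sim\mu_{i}^{\otimes2}}\bigl[\bigl(\tfrac{1}{d_{i}}\sum_{j}(f(\psi(j)x(j))-f(\psi(j)y(j)))\bigr)^{2}\bigr]$, what you need is control of the joint empirical distribution of the \emph{triples} $(\psi(j),x(j),y(j))$, not merely of the pairs $(x(j),y(j))$; the twisting by the $j$-dependent group element $\psi(j)$ is precisely the obstruction, and restricting an empirical average to the subset of coordinates where $\psi(j)$ lies near a fixed net point is not something local and empirical convergence controls.

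The missing ingredient is Austin's Theorem A, reformulated in the paper as Corollary \ref{C:dqfacts}(\ref{I:productdq}): if $\mu_{i}\to^{lde}m_{X}$ and $\psi$ is a good microstate for some $\nu\in\Prob_{G}(X)$, then $\mu_{i}$-most $\phi$ have $\psi\otimes\phi$ a good microstate for $\nu\otimes m_{X}$, so that the joint empirical distribution of $(\psi(j),\phi(j))_{j}$ is close to $\nu\otimes m_{X}$ and hence $\frac{1}{d_{i}}\sum_{j}f(\psi(j)\phi(j))\approx\int f\,dm_{X}$ with high $\mu_{i}$-probability. This single statement delivers both halves of your decomposition at once (composing with the multiplication map $p(x,y)=xy$), and it is a genuinely nontrivial theorem whose proof consumes the doubly empirical hypothesis; it cannot be recovered by the elementary first/second moment computation you propose. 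The paper's proof runs a compactness argument: assuming the lemma fails along $\psi_{i_{n}}\in\Map(\rho,F_{n}',\delta_{n}',\sigma_{i_{n}})$ with $F_{n}'\uparrow G$, $\delta_{n}'\downarrow 0$, it passes to a subsequence so that $(\psi_{i_{n}})_{*}(u_{d_{i_{n}}})\to\nu\in\Prob_{G}(X)$ and then applies Corollary \ref{C:dqfacts}(\ref{I:productdq}) to reach a contradiction. To repair your argument you would either invoke that corollary (at which point your two-part decomposition becomes unnecessary) or reprove its content, which is far more than a Chebyshev bound.
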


\begin{proof}

 Fix finite $F\subseteq G,L\subseteq C(X),$ and $\delta,\varepsilon>0.$ Let $F_{n}'$ be an increasing sequence of finite subsets of $G$ whose union is $G$ and let $\delta_{n}'$ be a decreasing sequence of positive real numbers converging to $0.$ If the claim is false, then we can find an increasing sequence of integers $i_{n}\to \infty$ and a $\psi_{i_{n}}\in \Map(\rho,F_{n}',\delta_{n}',\sigma_{i_{n}})$
so that
\[\mu_{i_{n}}(\{\phi:\psi_{i_{n}}\phi\in \Map(\rho,F,L,\delta,\sigma_{i_{n}})\})<1-\varepsilon.\]
Passing to a subsequence we may assume that
\[(\psi_{i_{n}})_{*}(u_{d_{i_{n}}})\to \nu\in \Prob_{G}(X).\]
Let $p\colon X\times X\to X$ be the multiplication map $p(x,y)=xy.$ Let $L'=\{f\circ p:f\in L\}.$ Let $\widetilde{\rho}$ be the metric on $X\times X$ defined by
\[\widetilde{\rho}((x_{1},x_{2}),(y_{1},y_{2}))^{2}=\frac{\rho(x_{1},y_{1})^{2}+\rho(x_{2},y_{2})^{2}}{2}.\]
By continuity of the map $p,$ we may find a $\delta'>0,$ and finite $L'\subseteq C(X),F'\subseteq G$ so that if  $\Phi \in \Map_{\nu\otimes m_{X}}(\widetilde{\rho},F,L',\delta',\sigma_{i_{n}}),$ then $p\circ \Phi\in \Map_{m_{X}}(\rho,F,L,\delta,\sigma_{i}).$  By Corollary \ref{C:dqfacts} (\ref{I:productdq}) we have 
\[\mu_{i_{n}}(\{\phi:\psi_{i_{n}}\otimes \phi\in \Map_{\nu\otimes m_{X}}(\widetilde{\rho},F',L',\delta',\sigma_{i_{n}})\})\geq 1-\varepsilon,\]
for all large $n.$
Our choice of $F',L',\delta'$ imply that
\[\mu_{i_{n}}(\{\phi:\psi_{i_{n}}\phi\in \Map_{m_{X}}(\rho,F,L,\delta,\sigma_{i_{n}})\})\geq 1-\varepsilon,\]
for all $n,$ a contradiction.

\end{proof}

We now proceed to showing that topological sofic entropy and measure-theoretic sofic entropy agree if there is a sequence $\mu_{i}\in \Prob(X^{d_{i}})$ so that $\mu_{i}\to^{lde}m_{X}.$ We recall the definition of topological entropy in the presence. This was  defined in \cite{LiLiang2} Definition 9.3 under the name ``entropy relative to an extension,'' we will use the term entropy in the presence to coincide with the terminology set out in \cite{Me9} Definition 2.7.

Let $(X,d)$ be a pseudometric space. For $A,B\subseteq X$ and $\delta>0$ we say that $A$ is $\delta$-contained in $B,$ and write $A\subseteq_{\delta}B,$ if for every $a\in A,$ there is a $b\in B$ with $d(a,b)<\delta.$ We say that $A\subseteq X$ is $\delta$-dense if $X\subseteq_{\delta}A.$ We say that $A\subseteq X$ is $\delta$-separated if for all $a\ne a'$ with $a,a'\in A$ we have $d(a,a')>\delta.$ We let $S_{\delta}(X,d)$ be the smallest cardinality of a $\delta$-dense subset of $X$ and $N_{\delta}(X,d)$ be the largest cardinality of a $\delta$-separated subset of $X.$ Note that
\[S_{\delta}(X,d)\leq N_{\delta}(X,d)\leq S_{\delta/2}(X,d).\]

\begin{defn}[Definition 9.3 in \cite{LiLiang2}] \label{D:topentrpresence} Let $G$ be a countable, discrete, sofic group with sofic approximation $\sigma_{i}\colon G\to S_{d_{i}}.$ Suppose that $X$ is a compact, metrizable space with $G\cc X$ by homeomorphisms. Let $G\cc Y$ be a topological factor with factor map $\pi\colon X\to Y$ (i.e. $G\cc Y$ by homeomorphisms and $\pi$ is a $G$-equivariant surjective map). Assume we have dynamically generating pseudometrics $\rho_{X},\rho_{Y}$  on $X,Y$ respectively. For a finite $F\subseteq G$ and $\delta>0,$ we set
\[\Map(Y:\rho_{X},F,\delta,\sigma_{i})=\{\pi\circ \phi:\phi\in \Map(\rho_{X},F,\delta,\sigma_{i})\}.\]
We define
\[h_{(\sigma_{i})_{i},\topo}(\rho_{Y}:\rho_{X},F,\delta,\varepsilon)=\limsup_{i\to\infty}\frac{1}{d_{i}}\log N_{\varepsilon}(\Map(Y:\rho_{X},F,\delta,\sigma_{i}),\rho_{Y,2}),\]
\[h_{(\sigma_{i})_{i},\topo}(\rho_{Y}:\rho_{X},\varepsilon)=\inf_{\substack{\textnormal{ finite} F\subseteq G,\\ \delta>0}}h_{(\sigma_{i})_{i},\topo}(\rho_{Y}:\rho_{X},F,\delta \varepsilon),\]
\[h_{(\sigma_{i})_{i},\topo}(Y:X,G)=\sup_{\varepsilon>0}h_{(\sigma_{i})_{i},\topo}(\rho_{Y}:\rho_{X},\varepsilon).\]
We call $h_{(\sigma_{i})_{i},\topo}(Y:X,G)$ the topological entropy of $Y$ in the presence of $X.$
\end{defn}

Note that if $Y=X$ with factor map $\id,$ then this is just the topological entropy. We also recall the formulation of measure-theoretic entropy in the presence we formulated in \cite{Me9}.

\begin{defn}[Definition 2.7 in \cite{Me9}]\label{D:measentpresence} Let $G$ be a countable, discrete, sofic group with sofic approximation $\sigma_{i}\colon G\to S_{d_{i}}.$ Let $X$ be a compact, metrizable space with $G\cc X$ by homeomorphisms and fix a $\mu\in \Prob_{G}(X).$ Let $G\cc Y$ be a topological factor with factor map $\pi\colon X\to Y$ (i.e. $G\cc Y$ by homeomorphisms and $\pi$ is a $G$-equivariant surjective map) and set $\nu=\pi_{*}\mu.$ Let $\rho_{X},\rho_{Y}$ be dynamically generating pseudometrics on $X,Y$ respectively. For finite $F\subseteq G,L\subseteq C(X),$ and $\delta>0$ we set
\[\Map_{\mu}(Y:\rho_{X},F,L,\delta,\sigma_{i})=\{\pi\circ \phi:\phi\in \Map_{\mu}(\rho_{X},F,L,\delta,\sigma_{i})\}.\]
We define
\[h_{(\sigma_{i})_{i},\mu}(\rho_{Y}:\rho_{X},F,L,\delta,\varepsilon)=\limsup_{i\to\infty}\frac{1}{d_{i}}\log N_{\varepsilon}(\Map_{\mu}(Y:\rho_{X},F,L,\delta,\sigma_{i}),\rho_{Y,2}),\]
\[h_{(\sigma_{i})_{i},\mu}(\rho_{Y}:\rho_{X},\varepsilon)=\inf_{\substack{\textnormal{ finite} F\subseteq G,\\ \textnormal{finite} L\subseteq C(X),\\  \delta>0}}h_{(\sigma_{i})_{i},\mu}(\rho_{Y}:\rho_{X},F,L,\delta,\varepsilon),\]
\[h_{(\sigma_{i})_{i},\mu}(Y:X,G)=\sup_{\varepsilon>0}h_{(\sigma_{i})_{i},\mu}(\rho_{Y}:\rho_{X},\varepsilon).\]
We call $h_{(\sigma_{i})_{i},\mu}(Y:X,G)$ the measure-theoretic entropy of $Y$ in the presence of $X.$
\end{defn}
Again if $Y=X$ with factor map $\id,$ this is just the measure-theoretic entropy. Recall that if $X$ is a compact group, then there is a  metric $d$ on $X$ giving the topology and such that $d(axb,ayb)=d(x,y)$ for all $a,b,x,y\in X$, we call $d$ a \emph{bi-invariant metric}.
\begin{thm}\label{T:maximize} Let $G$ be a countable, discrete group with sofic approximation $\sigma_{i}\colon G\to S_{d_{i}}.$ Let $X$ be a compact, metrizable group with $G\cc X$ by continuous automorphisms. Suppose that there exists $\mu_{i}\in\Prob(X^{d_{i}})$ with $\mu_{i}\to^{lde}m_{X}.$
Then
\[h_{(\sigma_{i}),\topo}(X/Y:X,G)=h_{(\sigma_{i})_{i},m_{X}}(X/Y:X,G),\]
for any closed, normal, $G$-invariant subgroup $Y$ of $X.$

\end{thm}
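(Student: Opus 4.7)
The plan is to prove the two inequalities separately. The direction $h_{(\sigma_i)_i,m_X}(X/Y:X,G)\leq h_{(\sigma_i)_i,\topo}(X/Y:X,G)$ is immediate from the definitions, since $\Map_{m_X}(\rho_X,F,L,\delta,\sigma_i)\subseteq \Map(\rho_X,F,\delta,\sigma_i)$ forces the same containment after applying the factor map $\pi:X\to X/Y$, and the separated-set count $N_\varepsilon$ is monotone in the ambient set. The substantive content is the reverse inequality, which I intend to establish by a Fubini-style convolution argument that upgrades a separated family of \emph{topological} microstates into a comparably large separated family of \emph{measure-theoretic} microstates by right-multiplication by a single, well-chosen element of $X^{d_i}$.

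For the setup, choose a compatible bi-invariant metric $\rho_Y$ on the compact quotient group $X/Y$ (which exists because $X/Y$ is a compact metrizable group), any dynamically generating pseudometric $\rho_X$ on $X$, and let $\pi:X\to X/Y$ denote the quotient homomorphism. Fix $\varepsilon>0$, finite $F\subseteq G$, $L\subseteq C(X)$, and $\delta>0$. The first step is to invoke Lemma \ref{L:dqconv2} with the internal parameter there set to $\tfrac12$: this yields $F'\subseteq G$ finite, $\delta'>0$, and $I\in\N$ so that for every $i>I$ and every $\psi\in\Map(\rho_X,F',\delta',\sigma_i)$,
\[
\mu_i\bigl(\{\phi\in X^{d_i}:\psi\phi\in \Map_{m_X}(\rho_X,F,L,\delta,\sigma_i)\}\bigr)\geq \tfrac12.
\]
The second step is the averaging: set $M_i=N_\varepsilon(\Map(X/Y:\rho_X,F',\delta',\sigma_i),\rho_{Y,2})$, choose a maximal $(\rho_{Y,2},\varepsilon)$-separated family $\theta_1,\dots,\theta_{M_i}$ in $\Map(X/Y:\rho_X,F',\delta',\sigma_i)$, and pick lifts $\psi_k\in\Map(\rho_X,F',\delta',\sigma_i)$ with $\pi\circ\psi_k=\theta_k$. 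Summing the above estimate over $k$ and applying Fubini produces some $\phi^*\in X^{d_i}$ and a subset $K\subseteq\{1,\dots,M_i\}$ with $|K|\geq M_i/2$ such that $\psi_k\phi^*\in \Map_{m_X}(\rho_X,F,L,\delta,\sigma_i)$ for every $k\in K$.

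Since $\pi$ is a homomorphism, $\pi(\psi_k\phi^*)=\theta_k\cdot\pi(\phi^*)$, and right-invariance of $\rho_Y$ preserves pairwise distances, so $\{\pi(\psi_k\phi^*)\}_{k\in K}$ is a $(\rho_{Y,2},\varepsilon)$-separated family inside $\Map_{m_X}(X/Y:\rho_X,F,L,\delta,\sigma_i)$ of cardinality at least $M_i/2$. Taking $d_i^{-1}\log$, then $\limsup_{i\to\infty}$, then $\inf$ over $(F,L,\delta)$, then $\sup$ over $\varepsilon$, gives $h_{(\sigma_i)_i,m_X}(X/Y:X,G)\geq h_{(\sigma_i)_i,\topo}(X/Y:X,G)$, as needed; the $\log(1/2)$ correction vanishes after dividing by $d_i\to\infty$. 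I expect the real obstacle to have been concentrated in Lemma \ref{L:dqconv2}, where $\mu_i\to^{lde}m_X$ is used essentially via the product stability statement in Corollary \ref{C:dqfacts}\,(\ref{I:productdq}): one needs the upgrade $\psi\mapsto\psi\phi$ to a measure-theoretic microstate to hold \emph{uniformly} in the topological microstate $\psi$, and local and empirical (as opposed to local and \emph{doubly} empirical) convergence would not deliver this uniformity. Once that input is granted, the remainder of the argument is nothing more than a Fubini averaging together with the bi-invariance of $\rho_Y$.
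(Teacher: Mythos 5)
Your proposal is correct and follows essentially the same route as the paper: the easy containment for one inequality, then Lemma \ref{L:dqconv2} combined with a Fubini averaging over a maximal separated family of topological microstates and the bi-invariance of the metric on $X/Y$ to produce a comparably large separated family of measure-theoretic microstates. The only cosmetic differences are that the paper keeps the Lemma \ref{L:dqconv2} parameter as $1-\varepsilon$ rather than $\tfrac12$ and phrases the averaging via $u_{S}\otimes\mu_{i}$ rather than summing over indices.
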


\begin{proof}

   By assumption, we may find a $\mu_{i}\in \Prob(X^{d_{i}})$ with $\mu_{i}\to^{lde}m_{X}.$  Fix compatible bi-invariant metrics $\rho_{X},\rho_{X/Y}$ on $X,X/Y,$  and let $\pi\colon X\to X/Y$ be the quotient map. It is clear that
\[h_{(\sigma_{i}),\topo}(X/Y:X,G)\geq h_{(\sigma_{i})_{i},m_{X}}(X/Y:X,G).\]
For the reverse inequality, fix $\varepsilon,\delta>0,$ and finite $F\subseteq G,L\subseteq C(X).$  By Lemma \ref{L:dqconv2} we may find a $\delta'>0,$ and a finite $F'\subseteq G$ so that for all sufficiently large $i,$ and for all $\psi\in \Map(\rho_{X},F',\delta',\sigma_{i}),$ 
\[\mu_{i}(\{\phi:\psi \phi\in \Map_{m_{X}}(\rho_{X},F,L,\delta,\sigma_{i})\})\geq 1-\varepsilon.\]
Choose a $S\subseteq \Map(\rho,F',\delta',\sigma_{i})$ so that $\rho_{2,X/Y}(\pi\circ \psi,\pi\circ \psi')\geq \varepsilon$ for $\psi,\psi'\in S$ with $\psi\ne \psi',$  and such that
\[|S|=N_{\varepsilon}(\Map(X/Y:\rho_{X},F',\delta',\sigma_{i}),\rho_{2,X/Y}).\]
By Fubini's theorem, we have
\[u_{S}\otimes \mu_{i}(\{(\psi,\phi):\psi \phi\in \Map_{m_{X}}(\rho_{X},F,L,\delta,\sigma_{i})\})\geq 1-\varepsilon,\]
for all sufficiently large $i.$
By Fubini's theorem again, for all sufficiently large $i$ we may find a $\phi \in X^{d_{i}}$ so that
\[u_{S}(\{\psi:\psi\phi\in \Map_{m_{X}}(\rho_{X},F,L,\delta,\sigma_{i})\})\geq 1-\varepsilon.\]
For such $i,\phi$ set
\[S_{0}=\{\psi:\psi \phi\in \Map_{m_{X}}(\rho_{X},F,L,\delta,\sigma_{i})\}.\]
For $\psi,\psi'\in S_{0}$ with $\psi\ne\psi',$ translation-invariance of $\rho_{X/Y}$ implies that
\[\rho_{2,X/Y}((\pi\circ \psi)(\pi\circ \phi),(\pi\circ \psi')(\pi\circ\phi))=\rho_{2,X/Y}(\pi\circ \psi,\pi\circ \psi')\geq \varepsilon.\]
Hence,
\[N_{\varepsilon}(\Map_{m_{X}}(X/Y:\rho_{X},F,L,\delta,\sigma_{i}),\rho_{2,X/Y})\geq |S_{0}|\geq (1-\varepsilon)|S|=(1-\varepsilon)N_{\varepsilon}(\Map(X/Y:\rho_{X},F',\delta',\sigma_{i}),\rho_{2,X/Y}).\]
Since the above inequality holds for all sufficienty large $i,$ it follows that
\[h_{(\sigma_{i})_{i},m_{X}}(\rho_{X/Y}:\rho_{X},F,L,\delta,\varepsilon)\geq h_{(\sigma_{i})_{i}}(\rho_{X/Y}:\rho_{X},F',\delta',\varepsilon)\geq h_{(\sigma_{i})_{i}}(\rho_{X/Y}:\rho_{X},\varepsilon).\]
Infimizing over $F,L,\delta$ proves that
\[h_{(\sigma_{i})_{i},m_{X}}(\rho_{X/Y}:\rho_{X},\varepsilon)\geq h_{(\sigma_{i})_{i}}(\rho_{X/Y}:\rho_{X},\varepsilon),\]
and taking the supremum over all $\varepsilon>0$ completes the proof.

\end{proof}
A special case is when $Y=\{1\},$ which gives us the following (compare \cite{GabSew} Theorem 8.2).
\begin{cor}Let $G$ be a countable, discrete group with sofic approximation $\sigma_{i}\colon G\to S_{d_{i}}.$ Let $X$ be a compact, metrizable group with $G\cc X$ by continuous automorphisms. If there exists a sequence $\mu_{i}\in\Prob(X^{d_{i}})$ with $\mu_{i}\to^{lde}m_{X},$
 then
\[h_{(\sigma_{i}),\topo}(X,G)=h_{(\sigma_{i})_{i},m_{X}}(X,G).\]\end{cor}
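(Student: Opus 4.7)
The plan is to apply Theorem \ref{T:maximize} with the choice $Y=\{1\}$. The trivial subgroup is certainly closed, normal, and $G$-invariant, so the hypotheses of Theorem \ref{T:maximize} are met as soon as the hypothesis of the corollary is. The corollary is then a direct translation of the conclusion of that theorem under this specialization.

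Concretely, when $Y=\{1\}$, the quotient map $\pi\colon X\to X/Y$ is, after the natural identification $X/\{1\}=X$, the identity map. Unpacking Definitions \ref{D:topentrpresence} and \ref{D:measentpresence}, this identifies $\Map(X/Y:\rho_X,F,\delta,\sigma_i)$ with $\Map(\rho_X,F,\delta,\sigma_i)$, and $\Map_{m_X}(X/Y:\rho_X,F,L,\delta,\sigma_i)$ with $\Map_{m_X}(\rho_X,F,L,\delta,\sigma_i)$. Passing these identifications through the nested $\limsup$, $\inf$, and $\sup$ appearing in the definitions of the entropies in the presence, the quantity $h_{(\sigma_i)_i,\topo}(X/Y:X,G)$ collapses to the usual topological sofic entropy $h_{(\sigma_i)_i,\topo}(X,G)$, and $h_{(\sigma_i)_i,m_X}(X/Y:X,G)$ collapses to the usual measure-theoretic sofic entropy $h_{(\sigma_i)_i,m_X}(X,G)$. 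The equality furnished by Theorem \ref{T:maximize} then reads exactly as the desired statement.

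There is no substantial obstacle: all of the hard work, namely the convolution argument of Lemma \ref{L:dqconv2} combined with the Fubini and translation-invariance manipulations in the proof of Theorem \ref{T:maximize}, has already been carried out in establishing the more general result. The only point requiring any verification here is the routine identification of the microstate spaces when the factor map is the identity, which is completely formal.
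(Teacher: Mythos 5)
Your proposal is correct and matches the paper exactly: the paper obtains this corollary by specializing Theorem \ref{T:maximize} to $Y=\{1\}$, noting that entropy in the presence reduces to ordinary entropy when the factor map is the identity. Nothing further is needed.
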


In this specific case we can say even more, due to the following Lemma.

\begin{lem}\label{L:dqconv}

 Let $G$ be a countable, discrete, sofic group with sofic approximation $\sigma_{i}\colon G\to S_{d_{i}}.$ Let $X$ be a compact, metrizable, group with $G\cc X$ by automorphisms. Suppose that $\mu_{i}\in \Prob(X^{d_{i}})$ with $\mu_{i}\to^{le}m_{X}.$ If $\nu_{i}\in \Prob(X^{d_{i}})$ is a sequence so that for all $F\subseteq G$ finite and $\delta>0$ we have
\[\nu_{i}(\Map(\rho,F,\delta,\sigma_{i}))\to 1,\]
then $\nu_{i}*\mu_{i}\to^{lde}m_{X}.$
\end{lem}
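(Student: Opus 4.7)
Proof plan: I aim to verify that $(\nu_{i} * \mu_{i}) \otimes (\nu_{i} * \mu_{i}) \to^{le} m_{X} \otimes m_{X}$ on $X \times X$ equipped with the diagonal $G$-action, which is by definition $\nu_{i} * \mu_{i} \to^{lde} m_{X}$. The essential structural identity is
\[
(\nu_{i} * \mu_{i}) \otimes (\nu_{i} * \mu_{i}) = (\nu_{i} \otimes \nu_{i}) * (\mu_{i} \otimes \mu_{i}),
\]
where the convolution on the right is taken in the compact group $(X \times X)^{d_{i}}$ with coordinatewise multiplication. This reduces matters to a product-space version of Lemma \ref{L:dqconv2} for the diagonal action on $X \times X$.

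I would equip $X\times X$ with the dynamically generating pseudometric $\widetilde{\rho}((a_{1},a_{2}),(b_{1},b_{2}))^{2} = \tfrac{1}{2}(\rho(a_{1},b_{1})^{2} + \rho(a_{2},b_{2})^{2})$, so that the $\widetilde{\rho}_{2}$-microstate defect splits as a sum of the two coordinate $\rho_{2}$-defects and the hypothesis $\nu_{i}(\Map(\rho, F, \delta, \sigma_{i})) \to 1$ transfers directly to $\nu_{i}\otimes\nu_{i}(\Map(\widetilde\rho, F, \delta', \sigma_{i})) \to 1$. For the coordinate-marginal part of $le$-convergence I would exploit that each projection $\mathcal{E}_{j} \colon (X\times X)^{d_{i}} \to X\times X$ is a group homomorphism, whence the $j$th marginal of $(\nu_{i}\otimes\nu_{i})*(\mu_{i}\otimes\mu_{i})$ factors as $\eta_{j}\otimes\eta_{j}$ with $\eta_{j} = (\mathcal{E}_{j})_{*}\nu_{i}*(\mathcal{E}_{j})_{*}\mu_{i}$; since $\mu_{i}\to^{le}m_{X}$ forces $(\mathcal{E}_{j})_{*}\mu_{i}$ to be weak-$^*$ close to $m_{X}$ for a $u_{d_{i}}$-large set of $j$, Lemma \ref{L:convolutionconvergence} puts $\eta_{j}$ close to $m_{X}$ and hence $\eta_{j}\otimes\eta_{j}$ close to $m_{X}\otimes m_{X}$ for most $j$.

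The core step is the measure-theoretic microstate concentration of $(\nu_{i}\otimes\nu_{i})*(\mu_{i}\otimes\mu_{i})$: for each topological microstate $\Psi = (\psi_{1},\psi_{2}) \in \Map(\widetilde\rho, F', \delta', \sigma_{i})$ of the diagonal action, I would show the $\mu_{i}\otimes\mu_{i}$-probability of the event $\{\Phi : \Psi\Phi \in \Map_{m_{X}\otimes m_{X}}(\widetilde\rho, F, L, \delta, \sigma_{i})\}$ exceeds $1-\varepsilon$ for all large $i$; a Fubini integration against $\Psi\sim\nu_{i}\otimes\nu_{i}$ then combines the pieces. This is the direct analogue on $X\times X$ of the conclusion of Lemma \ref{L:dqconv2}, and I would establish it by essentially repeating the proof of Lemma \ref{L:dqconv2} at the product level, using Lemma \ref{L:convolutionconvergence}---which asserts that convolving with a measure near Haar absorbs the other factor---to route the argument through the single-coordinate hypothesis $\mu_{i}\to^{le}m_{X}$. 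The main obstacle is this last step: obtaining the product-space conclusion of Lemma \ref{L:dqconv2} without first assuming $\mu_{i}\otimes\mu_{i}\to^{lde}m_{X}\otimes m_{X}$ on $X\times X$ is where the interplay between the convolution structure on $(X\times X)^{d_{i}}$ and the mere single-empirical control on $\mu_{i}$ becomes delicate, and careful bookkeeping of the Stone--Weierstrass density of product functions $f_{1}\otimes f_{2}$ together with independence of the four factors $\psi_{1},\psi_{2},\phi_{1},\phi_{2}$ will be essential.
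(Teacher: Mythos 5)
Your outline reproduces the paper's strategy essentially verbatim: the identity $(\nu_{i}*\mu_{i})\otimes(\nu_{i}*\mu_{i})=(\nu_{i}\otimes\nu_{i})*(\mu_{i}\otimes\mu_{i})$, the transfer of the topological-microstate hypothesis from $\nu_{i}$ to $\nu_{i}\otimes\nu_{i}$ via the metric $\widetilde{\rho}$, and the treatment of the coordinate marginals through the homomorphism property of $\mathcal{E}_{j}$ together with Lemma \ref{L:convolutionconvergence} all appear in the paper's proof. However, the step you yourself flag as the ``main obstacle'' is precisely the step you have not supplied, so the proposal has a genuine gap. Worse, the repair you gesture at --- re-running the proof of Lemma \ref{L:dqconv2} on $X\times X$ from ``mere single-empirical control on $\mu_{i}$'' plus Stone--Weierstrass bookkeeping --- is not going to work: the proof of Lemma \ref{L:dqconv2} is not an elementary estimate but an appeal to Corollary \ref{C:dqfacts}(\ref{I:productdq}) (Austin's Theorem A), whose hypothesis is local and \emph{doubly} empirical convergence of the ambient measure. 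You cannot extract the required concentration from local weak$^{*}$ data and independence of the four factors alone.

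The way to close the gap is not to re-prove anything at the product level but to apply Lemma \ref{L:dqconv2} as a black box to the system $G\actson X\times X$ (diagonal action, bi-invariant metric $\widetilde{\rho}$, Haar measure $m_{X\times X}=m_{X}\otimes m_{X}$) with the sequence $\mu_{i}\otimes\mu_{i}$: its hypothesis $\mu_{i}\otimes\mu_{i}\to^{lde}m_{X\times X}$ is available because local and doubly empirical convergence is preserved under forming products (Austin's Theorem A again). Granting that, your Fubini integration of the event $\{\Phi:\Psi\Phi\in\Map_{m_{X\times X}}(\widetilde{\rho},F,L,\delta,\sigma_{i})\}$ against $\Psi\sim\nu_{i}\otimes\nu_{i}$ finishes the argument exactly as in the first half of the paper's proof. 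Note that this makes transparent that the lemma's working hypothesis is really $\mu_{i}\to^{lde}m_{X}$ rather than $\mu_{i}\to^{le}m_{X}$: both halves of the argument route through Lemma \ref{L:dqconv2}, which requires lde, and the final product step requires lde of $\mu_{i}\otimes\mu_{i}$. (In the ergodic setting this costs nothing, since le upgrades to lde by Corollary \ref{C:corwmdq}.) Your instinct that the interplay is ``delicate'' under le-control alone was therefore correct; the resolution is to strengthen the input via product-stability of lde, not to refine the bookkeeping.
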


\begin{proof}

Let $\rho$ be a  continuous, translation-invariant metric on $X.$
We first show that $\nu_{i}*\mu_{i}\to^{le}m_{X}.$ We do this in two parts.

\emph{Part 1: We show that given finite $F\subseteq G, L\subseteq C(X),$ and a $\delta>0$ we have}
\[\nu_{i}*\mu_{i}(\Map_{m_{X}}(\rho,F,L,\delta,\sigma_{i}))\to_{i\to\infty}1.\]
To prove this fix $\varepsilon>0.$ Let $F',\delta'$ be as in Lemma \ref{L:dqconv2} for this $F,L,\delta,\mu_{i}.$ Lemma \ref{L:dqconv2} and Fubini's theorem then imply that 
\[\nu_{i}*\mu_{i}(\Map_{m_{X}}(\rho,F,L,\delta,\sigma_{i}))\geq (1-\varepsilon)\mu_{i}(\Map(\rho,F,L,\delta,\sigma_{i}))\]
for all sufficiently large $i.$ Thus
\[\liminf_{i\to\infty}\nu_{i}*\mu_{i}(\Map_{m_{X}}(\rho,F,L,\delta,\sigma_{i}))\geq 1-\varepsilon.\]
And since $\varepsilon>0$ is arbitrary, this proves Part 1.

\emph{Part 2: We show that $\nu_{i}*\mu_{i}\to^{lw^{*}}m_{X}.$}
Since $(\mathcal{E}_{j})_{*}(\mu_{i}*\nu_{i})=(\mathcal{E}_{j})_{*}(\mu_{i})*(\mathcal{E}_{j})_{*}(\nu_{i}),$ this is an easy exercise using Lemma \ref{L:convolutionconvergence} and the fact that $\mu_{i}\to^{lw^{*}}m_{X}.$

To show that $\mu_{i}*\nu_{i}\to^{lde}m_{X},$  let $\widetilde{\rho}$ be the continuous, translation-invariant metric on $X\times X$ given by
\[\widetilde{\rho}((x_{1},y_{1}),(x_{2},y_{2}))=\left(\frac{1}{2}(\rho(x_{1},x_{2})^{2}+\rho(y_{1},y_{2})^{2})\right)^{1/2}.\]
It is straightforward to show that for any finite $F\subseteq G$ and $\delta>0,$ we have that $\nu_{i}\otimes \nu_{i}(\Map(\widetilde{\rho},F,\delta,\sigma_{i}))\to 1.$ Since $\mu_{i}\otimes \mu_{i}\to^{le}m_{X}\otimes m_{X}=m_{X\times X}$ and 
\[(\nu_{i}*\mu_{i})\otimes (\nu_{i}*\mu_{i})=(\nu_{i}\otimes \nu_{i})*(\mu_{i}\otimes \mu_{i})\]
it follows from the first half of the proof that $(\nu_{i}*\mu_{i})*(\nu_{i}\otimes \mu_{i})\to^{le}m_{X}\otimes m_{X},$ i.e. that $\nu_{i}*\mu_{i}\to^{lde}m_{X}.$ 

\end{proof}

\begin{thm}\label{T:ergdqeqldahglahdg} Let $G$ be a countable, discrete, sofic group with sofic approximation $\sigma_{i}\colon G\to S_{d_{i}}.$ Let $X$ be a compact, metrizable, group with $G\cc X$ by automorphisms. If there exists $\mu_{i}\in \Prob(X^{d_{i}})$ and $\mu_{i}\to^{le}m_{X},$ then
\[h_{(\sigma_{i})_{i},m_{X}}^{lde}(X,G)=h_{(\sigma_{i})_{i},m_{X}}(X,G)=h_{(\sigma_{i})_{i},\topo}(X,G).\]
\end{thm}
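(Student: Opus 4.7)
The plan is to establish the chain
\[
h_{(\sigma_i)_i, \topo}(X, G) \leq h^{lde}_{(\sigma_i)_i, m_X}(X, G) \leq h_{(\sigma_i)_i, m_X}(X, G) \leq h_{(\sigma_i)_i, \topo}(X, G).
\]
The inequality $h_{(\sigma_i)_i, m_X}(X, G) \leq h_{(\sigma_i)_i, \topo}(X, G)$ is immediate from $\Map_{m_X}(\rho, F, L, \delta, \sigma_i) \subseteq \Map(\rho, F, \delta, \sigma_i)$. For $h^{lde}_{(\sigma_i)_i, m_X}(X, G) \leq h_{(\sigma_i)_i, m_X}(X, G)$, any sequence $\nu_{i_n} \to^{lde} m_X$ satisfies $\nu_{i_n} \to^{le} m_X$, hence $\nu_{i_n}(\Map_{m_X}(\rho, F, L, \eta, \sigma_{i_n})) \to 1$. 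Any $\varepsilon$-net for $\Map_{m_X}$ of cardinality at most $N_\varepsilon(\Map_{m_X}, \rho_2)$ $\varepsilon$-covers $\nu_{i_n}$-mass at least $1-\delta$ for large $n$, giving $S_{\varepsilon,\delta}(\nu_{i_n},\rho_2) \leq N_\varepsilon(\Map_{m_X}(\rho, F, L, \eta, \sigma_{i_n}), \rho_2)$; passing to $\limsup$, then infimizing over $F,L,\eta$, then supping over $\varepsilon,\delta$ yields the bound.

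The main content is the reverse inequality $h_{(\sigma_i)_i, \topo}(X, G) \leq h^{lde}_{(\sigma_i)_i, m_X}(X, G)$, which I would prove by a convolution argument. By Proposition \ref{P:switchmetric}, I may replace $\rho$ by a bi-invariant metric on $X$; then $\rho_2$ on $X^{d_i}$ is bi-invariant under componentwise multiplication, and bi-invariance on a compact group forces the inversion-invariance $\rho(x^{-1}, y^{-1}) = \rho(x, y)$. Fix $\varepsilon > 0$, choose $F_n \uparrow G$ and $\delta_n \downarrow 0$, and let $a_n = \limsup_i \tfrac{1}{d_i} \log N_\varepsilon(\Map(\rho, F_n, \delta_n, \sigma_i), \rho_2)$. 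Then $a_n$ is decreasing with $\lim_n a_n = h_{(\sigma_i)_i, \topo}(\rho, \varepsilon)$, and a diagonal extraction yields indices $i_n$ and maximal $\varepsilon$-separated sets $S_{i_n} \subseteq \Map(\rho, F_n, \delta_n, \sigma_{i_n})$ with $|S_{i_n}| = N_\varepsilon(\Map(\rho, F_n, \delta_n, \sigma_{i_n}), \rho_2)$ and $\limsup_n \tfrac{1}{d_{i_n}} \log |S_{i_n}| \geq h_{(\sigma_i)_i, \topo}(\rho, \varepsilon)$. Define $\nu_{i_n} = u_{S_{i_n}} * \mu_{i_n}$ with convolution taken with respect to the componentwise group law on $X^{d_{i_n}}$. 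Since $S_{i_n}$ is eventually contained in any prescribed topological microstate space, Lemma \ref{L:dqconv} gives $\nu_{i_n} \to^{lde} m_X$.

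The key separation estimate is $S_{\varepsilon/3, \delta}(\nu_{i_n}, \rho_2) \geq (1-\delta)|S_{i_n}|$. To see this, fix any $a \in X^{d_{i_n}}$. Bi-invariance together with inversion-invariance of $\rho_2$ shows that $\{\psi^{-1} a : \psi \in S_{i_n}\}$ is $\varepsilon$-separated, so the balls $B_{\varepsilon/3}(\psi^{-1} a)$ are pairwise disjoint, giving $\sum_{\psi \in S_{i_n}} \mu_{i_n}(B_{\varepsilon/3}(\psi^{-1} a)) \leq 1$. Consequently
\[
\nu_{i_n}(B_{\varepsilon/3}(a)) \;=\; \frac{1}{|S_{i_n}|} \sum_{\psi \in S_{i_n}} \mu_{i_n}(B_{\varepsilon/3}(\psi^{-1} a)) \;\leq\; \frac{1}{|S_{i_n}|},
\]
so any set $A$ with $\nu_{i_n}(N_{\varepsilon/3}(A)) \geq 1-\delta$ must satisfy $|A| \geq (1-\delta)|S_{i_n}|$. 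Taking $\tfrac{1}{d_{i_n}}\log$ and passing to the limit gives $h^{lde}_{(\sigma_i)_i}(m_X, \rho, \varepsilon/3; \delta) \geq h_{(\sigma_i)_i, \topo}(\rho, \varepsilon)$, and taking suprema over $\varepsilon, \delta$ completes the proof.

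The main obstacle is the simultaneous requirement that the chosen sets $S_{i_n}$ realize the topological entropy as a $\limsup$ while also lying inside exhausting topological microstate spaces, needed to invoke Lemma \ref{L:dqconv}; the diagonal argument above handles this. Once this is arranged, the separation estimate is essentially a pigeonhole/Fubini computation exploiting bi-invariance, and the three chain inequalities combine into the three-way equality.
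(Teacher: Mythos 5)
Your proposal is correct and follows essentially the same route as the paper: reduce to $h_{(\sigma_i)_i,\topo}(X,G)\le h^{lde}_{(\sigma_i)_i,m_X}(X,G)$, diagonally extract maximal separated sets $S_{i_n}$ of topological microstates realizing the topological entropy, and apply Lemma \ref{L:dqconv} to the convolution $u_{S_{i_n}}*\mu_{i_n}$ with a bi-invariant metric. The only (harmless) difference is the final counting step, where you bound $\nu_{i_n}(B_{\varepsilon/3}(a))\le 1/|S_{i_n}|$ directly via disjointness of the translated balls, whereas the paper uses Fubini to extract a single translate $\psi$ with a large $\varepsilon/2$-separated set $S_{i_n}^{0}\psi$ inside the given set $A$; both are equivalent pigeonhole arguments exploiting translation invariance.
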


\begin{proof} Let $\rho$ be a bi-invariant metric on $X.$ It is enough to show that  $h_{(\sigma_{i})_{i},\topo}(X,G)\leq h_{(\sigma_{i})_{i},m_{X}}^{lde}(X,G).$
Fix an increasing sequence of finite subsets $F_{n}$ of $G$ whose union is $G,$ a decreasing sequence $\delta_{n}$ of positive real numbers tending to zero, and an $\varepsilon>0.$ We may find a strictly increasing sequence $i_{n}$ of natural numbers with
\[\frac{1}{d_{i_{n}}}\log N_{\varepsilon/2}(\Map(\rho,F_{n},\delta_{n},\sigma_{i_{n}}),\rho_{2})\geq -2^{-n}+h_{(\sigma_{i})_{i}}(\rho,\varepsilon/2).\]
Let $S_{i_{n}}$ be an $\frac{\varepsilon}{2}$-separated subset of $\Map(\rho,F_{n},\delta_{n},\sigma_{i_{n}})$ with respect to $\rho_{2}$  of maximal cardinality.  By Lemma \ref{L:dqconv} we have that
\[u_{S_{i_{n}}}*\mu_{i_{n}}\to^{lde}m_{X}.\]
Fix $\varepsilon,\delta>0,$ and an integer $n,$ and suppose that $A\subseteq X^{d_{i_{n}}}$ has $u_{S_{i_{n}}}*\mu_{i_{n}}(A)\geq 1-\delta.$ Note that
\[u_{S_{i_{n}}}*\mu_{i_{n}}(A)=u_{S_{i_{n}}}\otimes \mu_{i_{n}}(\{(\phi,\psi):\phi \psi\in A\},\]
so we may find a $\psi\in X^{d_{i}}$ so that
\[u_{S_{i_{n}}}(\{\phi:\phi\psi \in A\})\geq 1-\delta.\]
Let $S_{i_{n}}^{0}=\{\phi:\phi\psi \in A\},$ by bi-invariance of $\rho$ we have that $S_{i_{n}}^{0}\psi$ is $\frac{\varepsilon}{2}$-separated and contained in $A,$ so
\[S_{\varepsilon/4}(A,\rho_{2})\geq N_{\varepsilon/2}(A,\rho_{2})\geq |S_{i_{n}}^{0}|\geq (1-\delta)N_{\varepsilon/2}(\Map(\rho,F_{n},\delta_{n},\sigma_{i_{n}}),\rho_{2}).\]
Since $S_{\varepsilon/4,\delta}(u_{S_{i_{n}}}*\mu_{i_{n}},\rho_{2})$ is the minimum of $S_{\varepsilon/4}(A,\rho_{2})$ over all sets $A\subseteq X^{d_{i_{n}}}$ with $u_{S_{i_{n}}}*\mu_{i_{n}}(A)\geq 1-\delta,$ we have
\[S_{\varepsilon/4,\delta}(u_{S_{i_{n}}}*\mu_{i_{n}},\rho_{2})\geq  (1-\delta)N_{\varepsilon/2}(\Map(\rho,F_{n},\delta_{n},\sigma_{i_{n}}),\rho_{2})\geq \exp(d_{i_{n}}(-2^{-n})+d_{i_{n}}h_{(\sigma_{i})_{i}}(\rho,\varepsilon/2)).\]
We thus see that
\[\limsup_{n\to\infty}\frac{1}{d_{i_{n}}}\log S_{\varepsilon/4,\delta}(u_{S_{i_{n}}}*\mu_{i_{n}},\rho_{2})\geq h_{(\sigma_{i})_{i}}(\rho,\varepsilon/2).\]
Taking the supremum over $\varepsilon,\delta>0$ completes the proof.

\end{proof}

We now close with a concrete list of actions for which measure-theoretic and topological entropy (as defined by Bowen, Kerr-Li and Austin) agree.

\begin{cor} Let $G$ be a countable, discrete, sofic group with sofic approximation $\sigma_{i}\colon G\to S_{d_{i}}.$ Let $X$ be a compact, metrizable, group with $G\cc X$ by automorphisms. Suppose that one of the following cases hold:
\begin{enumerate}[(i)]
\item $X=X_{f}$ for some $f\in M_{m,n}(\Z(G))$ with $\lambda(f)$ having dense image,
\item $X=X_{f}$ for some $f\in M_{n}(\Z(G))$ with $\lambda(f)$ injective,
\item $X$ is a profinite group and $G\cc X$ has a dense homoclinic group,
\end{enumerate}
then
\[h_{(\sigma_{i})_{i},m_{X}}^{lde}(X,G)=h_{(\sigma_{i})_{i},m_{X}}(X,G)=h_{(\sigma_{i})_{i},\topo}(X,G).\]

\end{cor}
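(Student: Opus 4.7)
The plan is to reduce the statement directly to the combination of Proposition \ref{P:nonndqentropyalgexamples} and Theorem \ref{T:ergdqeqldahglahdg}, which together cover all three cases. First, I would observe that in each of the listed cases, Proposition \ref{P:nonndqentropyalgexamples} already supplies a sequence $\mu_i \in \Prob(X^{d_i})$ with $\mu_i \to^{lde} m_X$: case (i) is part (\ref{E:Me}) of that proposition, case (ii) is part (\ref{E:Me2}), and case (iii) is part (\ref{E:Profinite}). In particular, $\mu_i \to^{lde} m_X$ is strictly stronger than $\mu_i \to^{le} m_X$, so the hypothesis of Theorem \ref{T:ergdqeqldahglahdg} is satisfied in each case.

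Next, I would invoke Theorem \ref{T:ergdqeqldahglahdg} directly: since $\mu_i \to^{le} m_X$ for some sequence $\mu_i \in \Prob(X^{d_i})$, we immediately conclude
\[h_{(\sigma_{i})_{i},m_{X}}^{lde}(X,G)=h_{(\sigma_{i})_{i},m_{X}}(X,G)=h_{(\sigma_{i})_{i},\topo}(X,G).\]
This is essentially a packaging result, with all the real content already absorbed into the previous theorems.

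If I wanted to make the argument slightly more self-contained, I could alternatively invoke Theorem \ref{T:maximize} (applied with $Y = \{1\}$) to get the equality $h_{(\sigma_{i}),\topo}(X,G) = h_{(\sigma_{i})_{i},m_{X}}(X,G)$ in all three cases, and then observe that Theorem \ref{T:ergdqeqldahglahdg} provides the additional equality with $h^{lde}_{(\sigma_i)_i,m_X}(X,G)$. But given that Theorem \ref{T:ergdqeqldahglahdg} already delivers all three equalities in one step, this is unnecessary.

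There is essentially no obstacle in this corollary itself: all substantive work has been done in Proposition \ref{P:nonndqentropyalgexamples} (verifying the existence of locally and doubly empirically convergent microstate measures in each concrete case) and in Theorem \ref{T:ergdqeqldahglahdg} (the convolution-plus-separation argument that promotes such a sequence to good separated microstates witnessing the $h^{lde}$ entropy). The corollary is a clean corollary in the traditional sense, merely listing the particular dynamical systems to which the abstract theorem applies.
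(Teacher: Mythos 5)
Your proposal is correct and matches the paper's own proof exactly: the paper also cites Proposition \ref{P:nonndqentropyalgexamples} to produce a sequence $\mu_i \to^{lde} m_X$ in each of the three cases and then applies Theorem \ref{T:ergdqeqldahglahdg}. No further comment is needed.
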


\begin{proof} We saw in Proposition \ref{P:nonndqentropyalgexamples} that each of these actions has a sequence $\mu_{i}\in \Prob(X^{d_{i}})$ with $\mu_{i}\to^{lde}m_{X},$ so this follows from Theorem \ref{T:ergdqeqldahglahdg}.

\end{proof}

\end{document}